\newtheorem{theorem}{Theorem}[subsection]
\newtheorem{lemma}[theorem]{Lemma}
\newtheorem{proposition}[theorem]{Proposition}
\theoremstyle{remark}
\newtheorem{remark}[theorem]{Remark}
\numberwithin{equation}{subsection}
\newcommand{\Z}{\ensuremath{\mathbb{Z}}}
\gdef\mnote#1{\marginpar{\footnotesize
 \tolerance\@M\spaceskip2.6\p@ plus10\p@ minus.9\p@\rm#1}}}
\let\Bbb\mathbb
\let\Cal\mathcal
\def\D{\Delta}
\def\G{\Gamma}
\def\P{\Cal P}
\def\L{\Cal L}
\def\s{\sigma}
\def\CL{\mathbf{L}}
\def\CM{\mathbf{M}}
\def\sm{\smallsetminus}
\def\lk{\operatorname{lk}}
\let\le\leqslant 
\let\la\langle
\let\ra\rangle
\def\Rp#1{\Bbb{RP}^{#1}}
\def\CCC{\Cal C}
\def\Sch{\operatorname{\mathbb{S}ch}}
\def\sign{\operatorname{sgn}}
\newcommand{\addresseshere}{%
  \enddoc@text\let\enddoc@text\relax
}
\begin{document}
\par
\renewcommand{\arraystretch}{1.2}
\title[Real Schl\"afli Six-Line Configurations]{Topology of Real Schl\"afli Six-Line Configurations on Cubic Surfaces and in $\Rp3$}
\author[S.~Finashin, R.A.~Zabun]{Sergey Finashin, Remz\.{i}ye Arzu Zabun}
\address{Department of Mathematics, Middle East Tech. University\\
 06800 Ankara Turkey}
\address{Department of Mathematics, Gaziantep University\\
 27310 Gaziantep Turkey}

\subjclass[2010]{Primary: 14P25. Secondary: 14N20.}

\keywords{Schl\"afli double sixes, configurations of skew lines}
\date{}
\dedicatory{}
\begin{abstract}
A famous configuration of 27 lines on a non-singular cubic surface in $\mathbb P^3$
contains remarkable subconfigurations, and in particular the ones formed by six pairwise disjoint lines.
We study such six-line configurations in the case of real cubic surfaces from topological viewpoint, as configurations of six disjoint lines
in the real projective 3-space, and show that the condition that they
lie on a cubic surface implies a very special property of {\it homogeneity}. This property distinguish them
in the list of 11 deformation types of configurations formed by six disjoint lines in $\mathbb{RP}^3$.
\end{abstract}

\maketitle

\hbox{\hskip30mm\rightline{\vbox{\hsize90mm
\noindent\baselineskip10pt{\it\footnotesize
Drawing at pleasure five lines $a,b,c,d,e$ which meet a line $F$, then may any four of the five lines be intersected by another line besides $F$.
Suppose that $A,B,C,D,E$ are the other lines
intersecting $(b,c,d,e)$, $(c,d,e,a)$, $(d,e,a,b)$, $(e,a,b,c)$ and $(a,b,c,d)$ respectively. Then ...
there will be a line $f$ intersecting the five lines $A$, $B$, $C$, $D$, $E$.''
\vskip3mm
\noindent
\rm
\vbox{\noindent
{\bf Ludwig Schl\"afli}
\textit{``An attempt to determine the twenty-seven lines upon a
surface of the third order, and to divide such surfaces into species in
reference to the reality of the lines upon the surface''}
}}}}}

\vskip3mm
\section{Introduction}\label{introduction}
\subsection{Segre classification of Real Schl\"afli (double) sixes}\label{Segre-classification}
A sextuple of skew lines, $\L=\{L_1,\dots,L_6\}$, in $\mathbb{P}^3$ will be called
a {\it Schl\"afli six} if there is some non-singular cubic surface, $X\subset \mathbb{P}^3$, containing all these lines.
A necessary and sufficient condition for existence of such cubic was found by Schl\"afli \cite{Sch}: each subset of 5 lines $\L\sm\{L_i\}$ must have a common transversal,
$L_i'$, but all 6 lines of $\L$ should not have a common transversal.
Then configuration $\L'=\{L_1',\dots,L_6'\}$ is also a Schl\"afli six called  {\it complementary to $\L$}.
Together $\L$ and $\L'$ form a {\it Schl\"afli double six} which is traditionally written in the form
$\binom{L_1\,L_2\,L_3\,L_4\,L_5\,L_6}{L_1'\,L_2'\,L_3'\,L_4'\,L_5'\,L_6'}$.

Note that cubic $X$ is determined uniquely by a Schl\"afli six $\L$, since $\L$ determines $\L'$, and
two irreducible cubics cannot have 12 common lines by Bezout theorem.

Schl\"afli found that cubic surfaces may have 5 real forms, $F_1$,\dots, $F_5$ that contain respectively 27, 15, 7, 3, and 3 real lines (among 27 complex ones).
We will be concerned only about {\it real Schl\"afli six} that is by definition Schl\"afli six whose lines are real.
As Schl\"afli observed, it is only cubics of type $F_1$ that may contain real Schl\"afli sixes. In particular, for any real Schl\"afli six $\L$
the complementary one, $\L'$ is also real, and they form together a {\it real Schl\"afli double six}.

A deformation classification
of real Schl\"afli (double) sixes was performed by B.~Segre \cite{Se}, \textsection{61}, who found 4 kinds of them.
One way to distinguish these 4 kinds is to blow down the lines of a real Schl\"afli six on a cubic $X$ which gives a real projective plane with 6 marked points in general position;
``generality'' here means that no triple of points is collinear and all six are not coconic.
Figure \ref{adjgraph6} shows 4 six-point configurations on $\Rp2$ representing these 4 kinds of real Schl\"afli six.
We enhanced these configurations $\P$ to {\it inseparability graphs}.
\begin{figure}[h]
\subfigure[I.Icosahedral]{\scalebox{0.42}{\includegraphics{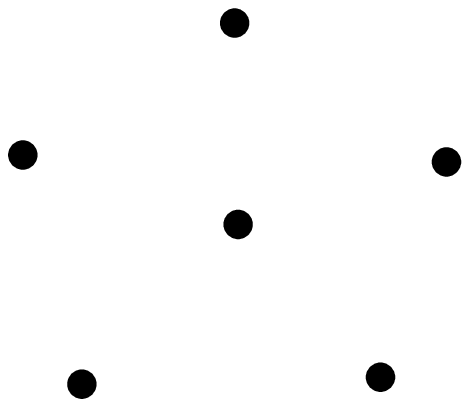}}}\hspace{0.8cm}
\subfigure[II.Bipartite]{\scalebox{0.42}{\includegraphics{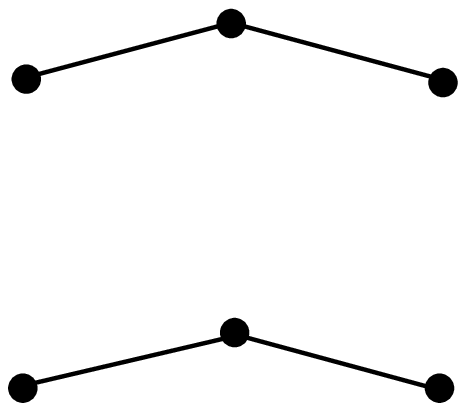}}}\hspace{0.8cm}
\subfigure[III.Tripartite]{\scalebox{0.42}{\includegraphics{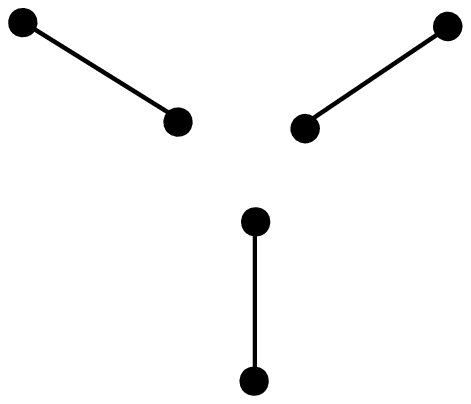}}}\hspace{0.8cm}
\subfigure[IV.Hexagonal]{\scalebox{0.42}{\includegraphics{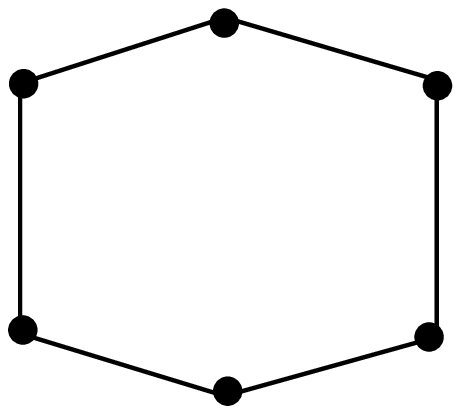}}}
\caption{4 kinds of 6-point configurations with inseparability graphs}
  \label{adjgraph6}
\vskip-3mm\end{figure}
Namely, we join
a pair of points of a configuration by an edge (line segment in $\Rp2$)
if and only if it has no intersections with the lines connecting pairwise the remaining four points.
According to the shape of this graph, we named these 6-point configurations
{\it icosahedral, bipartite, tripartite}, and {\it hexagonal} (see Figure \ref{adjgraph6})
 and use the same names also for the corresponding Schl\"afli sixes.
 Segre observed that the complementary Schl\"afli six $\L'$ has to be of the same kind as $\L$.

To each of the four kinds of Schl\"afli sixes $\L$, Segre (see \cite{Se}, \textsection{56}) associated
a diagram that we call {\it Segre pentagram}.
Namely,  let us denote by $\pi_i$, $i=1,\dots 5$ the plane spanned by lines $L_6\in \L$ and $L_i'\in\L'$ and numerate
the lines in our double six so that planes $\pi_1,\dots,\pi_5$ appear consecutively in the pencil of planes passing through $L_6$.
 Then the five intersection points $s_i=L_6\cap L_i'$, $i=1,\dots 5$ follow on line $L_6$
 in some (cyclic) order $s_{\s(1)},\dots,s_{\s(5)}$, where $\s\in S_5$ is a permutation.
 The Segre pentagram of $\L$ is obtained from a regular pentagon whose vertices are cyclically numerated by $1,\dots,5$ and connected by a broken line
$\s(1)\dots\s(5)\s(1)$.
\begin{figure}[h]
\centering
\scalebox{0.35}{\includegraphics{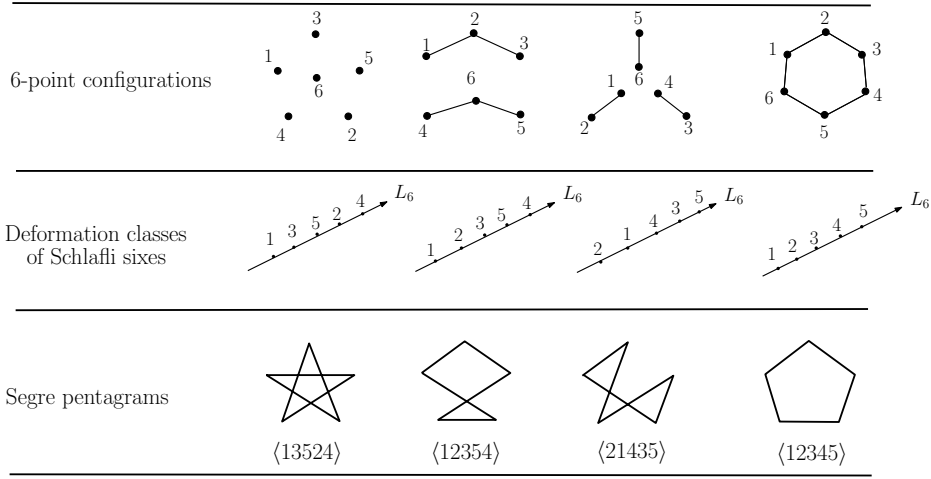}}
\caption{Segre pentagrams of real Schl\"afli sixes}
\label{ACC6}
\vskip-2mm\end{figure}
In terms of 6-point configurations, this corresponds to labeling points by $1,\dots,5$ consecutively with respect to the pencil of lines passing through point $6$;
then  on the conic passing through these  5 points, their consecutive order is $\s(1),\dots,\s(5)$
(see Figure \ref{ACC6}).

\subsection{The main result}\label{results}
Our aim in this paper is to find the place of the four kinds of real Schl\"afli sixes
in the Mazurovski\u\i~ deformation classification of configurations of 6 skew lines in $\Rp3$.
By {\it deformation} in that context we mean a path is the space of all configurations of 6 skew lines.
The Mazurovsk\u\i~ classification is expressed in terms of {\it join $n$-configurations} formed by $n$ skew lines that have two common transversals.
Namely, let us fix a pair of skew lines $L^p,L^q\subset\Rp3$ oriented so that the linking number $\lk(L^p,L^q)$ is negative,
and choose points $p_1,\dots, p_n\in L^p$ and $q_1,\dots,q_n\in L^q$ numerated consecutively with respect to the orientations.
Then with any permutation $\s\in S_n$ we can associate join configuration $J(\s)=\{L_1,\dots,L_n\}$, where line $L_i$ connects points $p_i$ and
$q_{\s(i)}$.
The choice of other lines $L^p$, $L^q$, or other points $p_i$, $q_i$, clearly, does not change $J(\s)$ up to deformation.
By contrary, if points $p_i$ or $q_i$ are numerated in the opposite order
(in accord with orientations of $L^p$ and $L^q$ that makes them negatively linked)
a similar construction yields instead of $J(\s)$ its {\it mirror partner} $\bar J(\s)$, defined as the image of $J(\s)$
under an orientation reversing projective transformation $\Rp3\to\Rp3$.

Mazurovski\u\i~, using calculation of a version of Jones polynomial,
 observed that configurations $\mathbf{M}$ and $\mathbf{L}$ on Figure \ref{ML} cannot be deformed into
join configurations.
\begin{figure}[h]
\vskip-2mm\centering
\subfigure[$J(123654)$]{\scalebox{0.28}{\includegraphics{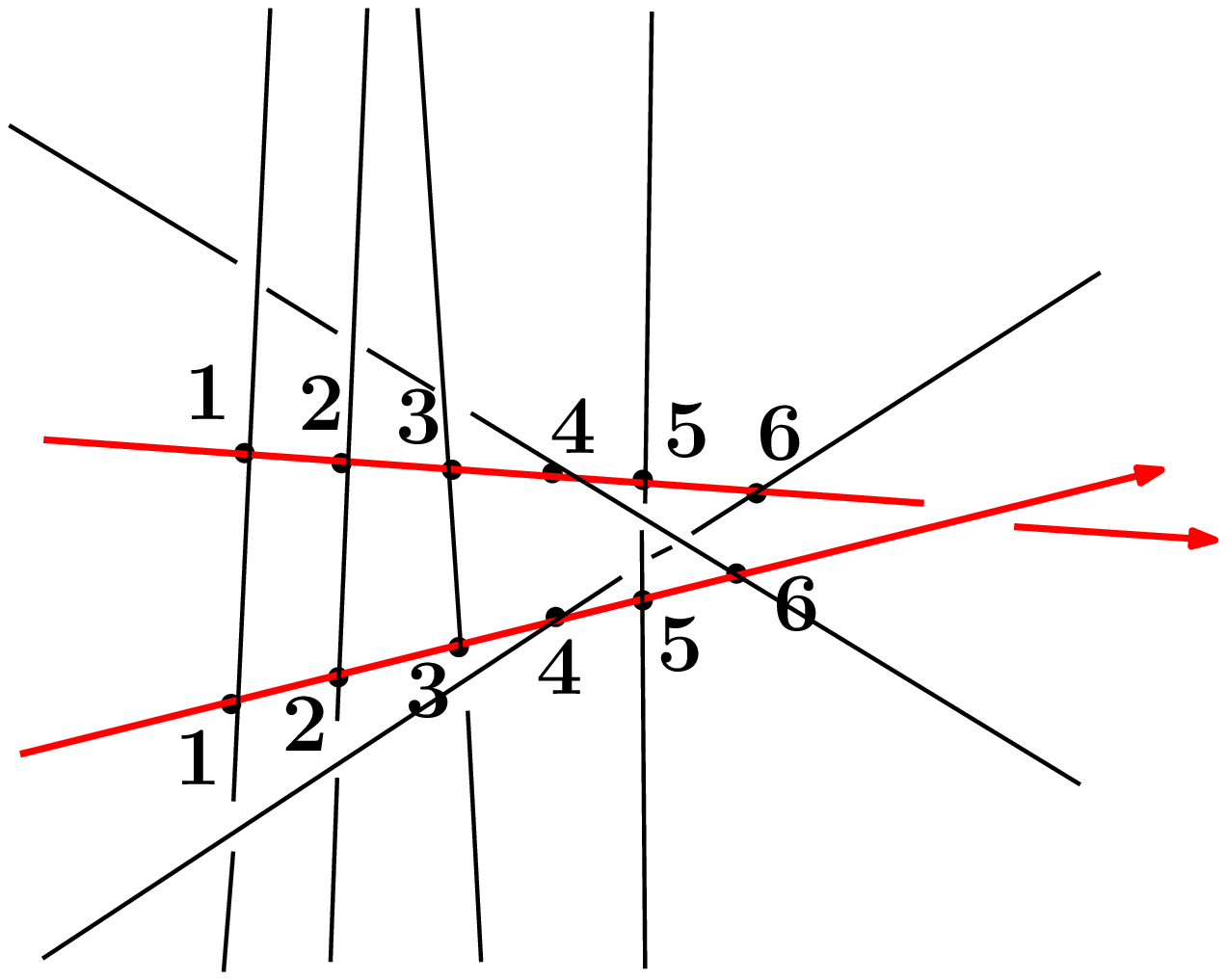}}}\hspace{0.4mm}
\subfigure[$\mathbf{M}$]{\scalebox{0.24}{\includegraphics{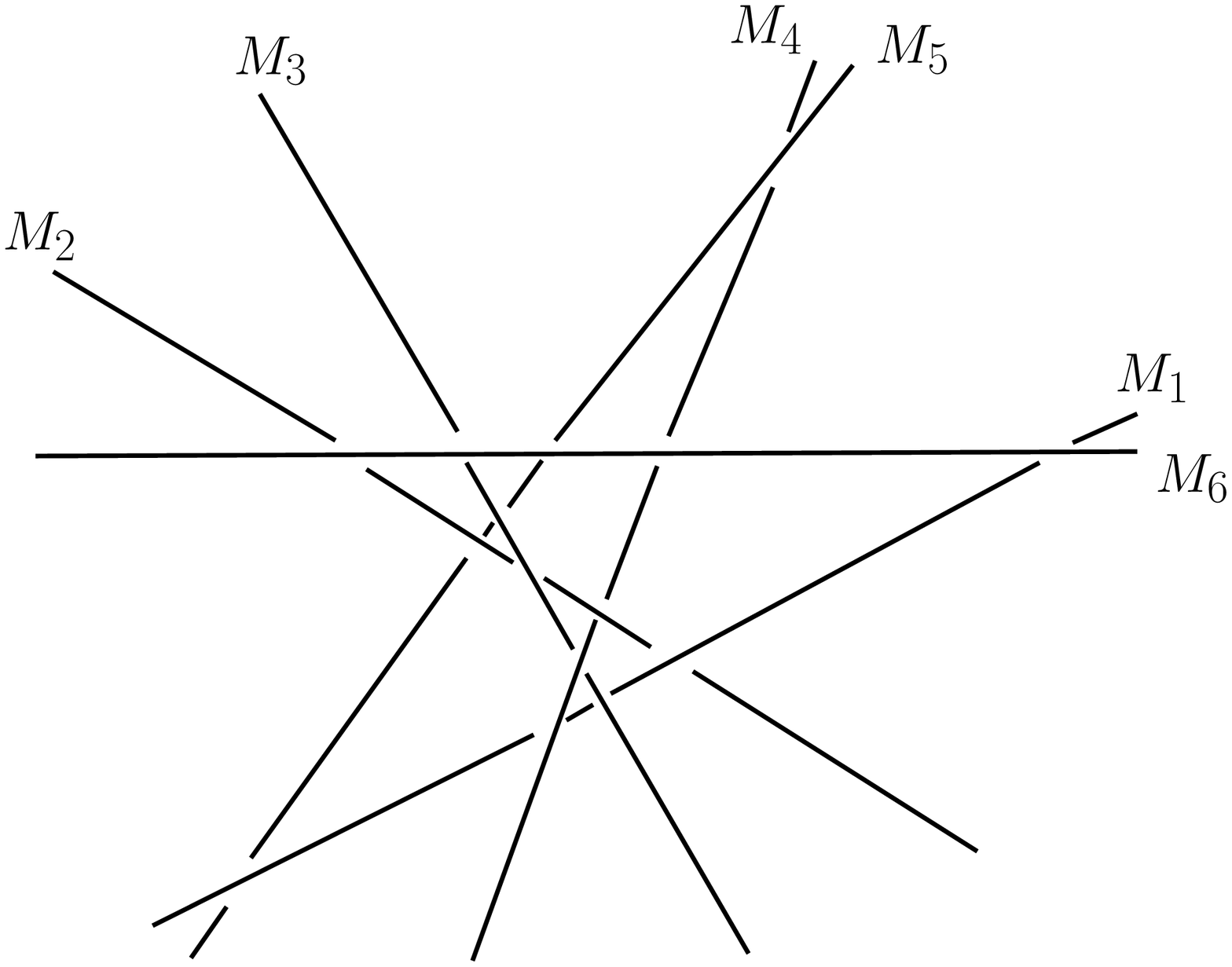}}}\hspace{0.4mm}
\subfigure[$\mathbf{L}$]{\scalebox{0.24}{\includegraphics{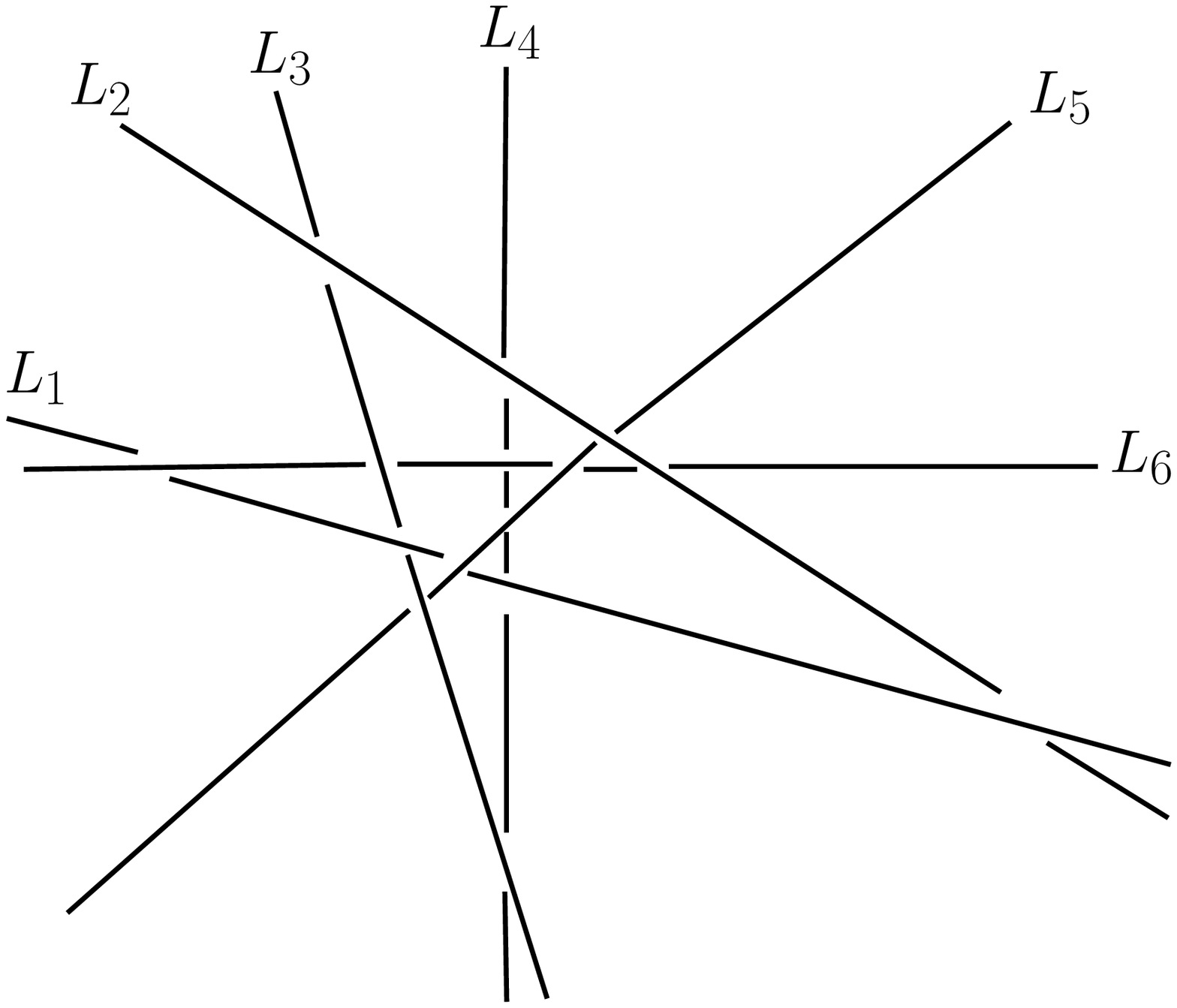}}}
\caption{A join $J(123654)$ and configurations $\mathbf{M}$ and $\mathbf{L}$}
\label{ML}
\vskip-3mm\end{figure}

\begin{theorem}\label{maintheorem}
A real Schl\"afli six $\L$ can be connected by a deformation with the configuration
\begin{enumerate}
\item
$J(123456)$ or  its mirror partner $J(654321)$ if $\L$ is hexagonal,
 \item
$J(123654)$ if $\L$ is bipartite,
 \item
$J(214365)$ or its mirror partner $J(563412)$ if $\L$ is tripartite,
  \item
$\CM$ (shown on Figure \ref{ML}) or its mirror partner $\mathbf{\bar{M}}$ if $\L$ is icosahedral.
  \end{enumerate}
\end{theorem}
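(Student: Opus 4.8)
The plan is to compute, for each of the four kinds of real Schl\"afli six, a complete deformation invariant in the sense of Mazurovski\u\i\ and to match it against the model configuration named in the statement. Since Mazurovski\u\i's invariants (the linking type refined by the Jones-type polynomial) are a complete invariant for configurations of six skew lines in $\Rp3$ --- this completeness is exactly what produces the list of $11$ deformation types --- equality of invariants will already yield the asserted deformation, so no path need be written down by hand once the invariants agree. The combinatorial data attached to $\L$ in the Introduction will play a double role: the inseparability graph of Figure \ref{adjgraph6} singles out which kind we are in and fixes the linking pattern, while the Segre pentagram of Figure \ref{ACC6} encodes the cyclic orders that will recover the join permutation $\s$.

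For the three \emph{join} kinds (hexagonal, bipartite, tripartite) I would realize $\L$ as a join by producing two common transversals. Each complementary line $L_i'$ already meets five of the six lines of $\L$ (all but $L_i$), so it is a transversal missing a single line. I would deform $\L$, keeping $L_1,\dots,L_6$ pairwise skew throughout, so that two chosen complementary lines, say $L_5'$ and $L_6'$, each acquire the one missing intersection and become genuine transversals to all six lines; the result is a join $J(\s)$. The permutation $\s$ is then read off from the two cyclic orders in which $L_1,\dots,L_6$ meet the transversals $L_5',L_6'$, and these orders are precisely the data recorded by the Segre pentagrams: labelling the lines as in the pentagram construction ($\pi_i$ the plane through $L_6$ and $L_i'$, $s_i=L_6\cap L_i'$) the consecutive order $s_{\s(1)},\dots,s_{\s(5)}$ controls one axis, and the dual order on the second transversal the other. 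Matching the resulting linking signs $\lk(L_i,L_j)$ against the prescribed orders should give $\s=123456$, $123654$, $214365$ respectively; whether one lands on $\s$ or on its mirror $\bar J(\s)$ is then decided by the sign convention on $\lk(L^p,L^q)$, i.e.\ by the amphichirality of the class (so bipartite yields only $J(123654)$, while hexagonal and tripartite yield a genuine mirror pair).

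The icosahedral case is the crux, and here the transversal construction must genuinely fail: by the result of Mazurovski\u\i\ quoted before the theorem, $\CM$ and $\CL$ cannot be deformed into join configurations, so if the icosahedral six deforms to $\CM$ it cannot admit any deformation to a join, and in particular the two complementary lines cannot be made into common transversals while $L_1,\dots,L_6$ stay skew. I would therefore treat this case by building an explicit model --- six points in general position on $\Rp2$ realizing the icosahedral inseparability graph, blown up to reconstruct the cubic of type $F_1$ and its six lines --- and computing the Jones-type invariant directly on this model. The computation must do three things: certify that the class is $\CM$ rather than $\CL$ (both of which share the icosahedral linking type with a join, which is exactly why the linking numbers alone cannot decide), rule out the joins of that same linking type, and fix the mirror $\CM$ versus $\bar\CM$ from the orientation data of the model.

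I expect two steps to carry the real difficulty. The first is verifying that the transversal-creating deformation in the join cases actually keeps all six lines skew (and, dually, that it is obstructed for the icosahedral kind) rather than passing through a configuration where two lines meet; this is the step that translates Segre's purely planar combinatorics on $\Rp2$ into honest isotopy statements in $\Rp3$. The second is the Jones-type computation for the icosahedral model, which is the only place where linking data is insufficient and where the distinction between $\CM$, $\CL$, and a like-linked join must be made by the finer invariant. Everything else --- reading $\s$ off the pentagram and resolving the mirror ambiguity via $\lk$ --- should reduce to bookkeeping once these two points are secured.
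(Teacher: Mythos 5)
Your two ``steps that carry the real difficulty'' are not technical details you may defer --- they are the entire content of the theorem, and you supply no mechanism for either. For the join kinds, the plan ``deform $\L$, keeping the six lines skew, until $L_5'$ and $L_6'$ become common transversals'' is a restatement of the conclusion (that the class of $\L$ contains a join), not an argument: by Schl\"afli's criterion the six lines of $\L$ have \emph{no} common transversal, so any such deformation must leave the locus of Schl\"afli sixes, and for the icosahedral kind the deformation provably cannot exist at all; hence whatever justification you give must already detect the difference between the kinds, which is exactly what is to be proved. The paper never constructs such a deformation of the full 6-line configuration. Instead it exploits the fact that each 5-line subconfiguration $\L\sm\{L_i\}$ is \emph{already} a spindle with axis $L_i'$ --- no deformation needed --- hence, by Proposition \ref{spindle-to-join}, deformation equivalent to the join 5-configuration encoded by the Segre pentagram; Segre's Theorem \ref{homogeinety-theorem} then makes $\L$ homogeneous, its coarse signature spectrum becomes computable, and matching spectra against the 11 coarse classes of Theorem \ref{Mazurovski} pins down the class by invariants alone, with no isotopy ever exhibited by hand.

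Your icosahedral analysis, besides being deferred, rests on a false premise and misplaces the crux. It is not true that ``linking numbers alone cannot decide'' between $\CM$ and $\CL$: their coarse signature spectra differ --- that of $\CM$ is $0$ repeated six times, while that of $\CL$ is $2$ repeated six times, the same as $\la J(214365)\ra$. So once homogeneity is established, pure linking data identifies the icosahedral class as $\la\CM\ra$, and no Jones-type computation on an explicit model is required (that tool is needed only inside the proof of Mazurovski\u\i's classification, which both you and the paper take as given). The case that genuinely cannot be settled by linking invariants is the tripartite one, where $\la\CL\ra$ and $\la J(214365)\ra$ have identical spectra; your proposal never addresses this ambiguity. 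The paper resolves it with Lemma \ref{reducibility}: moving one point of $\P$ along an edge of the inseparability graph and invoking the GIT correspondence with nodal cubics produces a degeneration in which two lines merge while the other four stay skew, so tripartite Schl\"afli sixes are reducible, whereas $\CL$ is irreducible by Mazurovski\u\i~ --- forcing the class $\la J(214365)\ra$. Your constructive route would sidestep this dichotomy only if the join-producing deformation of the previous paragraph were actually built, which is precisely the gap you left open.
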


\subsection{Scheme of proof}\label{scheme-of-proof}
First, we will observe that Schl\"afli sixes $\L=\{L_1,\dots,L_6\}$ have the following property of  {\it homogeneity}:
all six configurations $\L_i=\L\sm\{L_i\}$, $i=1,\dots,6$, obtained by
dropping a line from $\L$ belong to one coarse deformation class (see definition in Subsection \ref{delnotation})
and this class is different for different kinds of Schl\"afli sixes.
Next, we find the deformation classes in the Mazurovski\u\i~ list having this property and forbid among these classes,
class $\la\mathbf{L}\ra$.
The remaining four homogeneous deformation classes are the ones that appear in Theorem \ref{maintheorem}.

\section{Preliminaries}\label{preliminaries}

\subsection{Deformation and coarse deformation classes}\label{delnotation}
Let us denote by $\CCC_n$ the space formed by configurations $\L=\{L_1,\dots,L_n\}$ of n skew lines in $\Rp3$,
and by $[\CCC_n]$
 the set of connected components  of $\CCC_n$, which can be viewed also as
 {\it deformation equivalence classes}, where the equivalence means existence of a path (deformation)
connecting configurations in $\CCC_n$.
For $\L\in\CCC_n$ let $[\L]\in[\CCC_n]$ denote its deformation class. A mirror partner $\bar\L$ of $\L\in\CCC_n$ as it was defined in Subsection \ref{results}
is not unique, but its class $[\bar\L]\in\CCC_n$ is well-defined, depends only on $[\L]$ and will be called the {\it mirror partner class} for $[\L]$.
If $[\L]\ne[\bar\L]$, then
$\L$ is called {\it chiral configuration} and its class $[\L]$ {\it chiral class}.
Otherwise (in the case of $[\L]=[\bar\L]$) configuration $\L$ and its class $[\L]$ are called
{\it achiral}. The union $\la\L\ra=[\L]\cup[\bar\L]$ will be called {\it coarse deformation class} of $\L$,
and the set of coarse deformation classes in $\CCC_n$ will be denoted by $\la\CCC_n\ra$.

The set of real Schl\"afli sixes will be denoted by $\Sch$, its set of connected components
(deformation equivalence classes) by $[\Sch]$, and the set of coarse deformation classes by $\la\Sch\ra$
(mirror partners and chirality/achirality are defined similarly).

\begin{theorem}[{\bf Segre}]\label{Segretheorem}
There exist four coarse deformation classes in $\la\Sch\ra$
formed respectively by hexagonal, bipartite, tripartite and icosahedral Schl\"afli sixes.

Bipartite Schl\"afli sixes are achiral and the others are chiral. Thus, there exist 7 deformation classes in $[\Sch]$:
three pairs of mirror partners for hexagonal, tripartite and icosahedral Schl\"afli sixes and one class formed by bipartite Schl\"afli sixes.
\qed\end{theorem}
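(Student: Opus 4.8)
The plan is to reduce the problem to a planar one through the blow-down of Subsection~\ref{Segre-classification}, and then to analyze the resulting configuration space combinatorially. The first step is to make the reduction precise: a nonsingular real cubic of type $F_1$ is exactly the blow-up of $\Rp2$ at six real points in general position, with a chosen Schl\"afli six $\L$ appearing as the six exceptional curves, and conversely blowing up six such points yields a cubic carrying a Schl\"afli six. These operations are mutually inverse and continuous in families, so they induce a bijection between $[\Sch]$ and the set of connected components of the space of (unordered) six-point configurations in $\Rp2$ obeying the genericity of Subsection~\ref{Segre-classification}: no three collinear and not all six coconic. Since $\mathrm{PGL}_3(\R)$ is connected, passing to the quotient by projective transformations leaves these components unchanged. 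The theorem thus becomes the assertion that this space has four components up to the mirror involution, with the stated chirality pattern.

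Next I would classify these components. Their walls are the loci where some triple becomes collinear or where all six points become coconic, and on each chamber I would record a complete combinatorial invariant: the chirotope (oriented matroid) of the six points, recording the sign of every triple, refined by the side on which the sixth point lies relative to the conic through each five-subset. Because no three points are collinear, this conic is nondegenerate and nonempty, hence separates $\Rp2$ into a disk and a M\"obius band, so the notion of ``side'' is well defined. A direct enumeration, using the $S_6$-symmetry and the realizability constraints in $\Rp2$, should show that exactly four such invariants occur up to relabeling and that any two configurations sharing an invariant are joined by a wall-avoiding path. From the invariant one reads off the inseparability graph of Figure~\ref{adjgraph6}, attaching the names icosahedral, bipartite, tripartite, hexagonal.

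Finally I would determine chirality. Under the blow-down the mirror partner $\bar\L$ corresponds to reflecting the planar configuration, which reverses its chirotope and hence reflects the Segre pentagram of Figure~\ref{ACC6}, i.e. replaces the permutation $\s$ by its reverse. A type is achiral exactly when its pentagram is reflection-symmetric. Inspecting the four pentagrams shows the bipartite one symmetric and the hexagonal, tripartite and icosahedral ones asymmetric, so only the bipartite class is achiral. Hence the four coarse classes split into three chiral pairs and one achiral class, giving $3\cdot 2 + 1 = 7$ classes in $[\Sch]$.

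The principal obstacle lies in the completeness of the invariant in the second step --- proving that configurations with equal combinatorial data lie in a single component (the connectivity of chambers) and that precisely four data arise up to relabeling, rather than enumerating a longer list and merging it afterwards. The chirality analysis is the secondary subtlety, since it demands a precise description of how the orientation reversal of $\Rp3$ descends to the planar data and a careful check of the symmetry of each pentagram.
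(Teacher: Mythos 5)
Your reduction to the plane contains a structural error that cannot be repaired within the approach. Blowing down determines the six-point configuration only up to $\mathrm{PGL}_3(\R)$, which is connected; but blowing up determines the pair $(X,\L)$ only up to $\mathrm{PGL}_4(\R)$, which is \emph{not} connected: it has two components, one of which reverses the orientation of $\Rp3$. Concretely, if $T$ is an orientation-reversing projective transformation of $\Rp3$, then $\L$ and its mirror partner $T(\L)$ blow down to literally the same planar configuration. Hence the blow-down correspondence identifies mirror partners and induces a bijection of $\pi_0$ of the planar configuration space with the set of \emph{coarse} classes $\la\Sch\ra$, not with $[\Sch]$; carried through, your argument would yield four classes in $[\Sch]$, contradicting the seven asserted in the statement. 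The root of the confusion is that $\Rp2$ is non-orientable: a ``reflection'' of $\Rp2$ lies in the connected group $\mathrm{PGL}_3(\R)$, hence is isotopic to the identity, so reflecting a planar configuration never changes its deformation class, and there is no mirror involution acting nontrivially on the planar $\pi_0$. For the same reason your proposed invariant is ill-posed: the sign of a triple of points of $\Rp2$ (hence the chirotope) is not well defined, because it flips when one homogeneous representative is replaced by its negative; only the unoriented matroid and the conic-side data survive.

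This makes the chirality step unfounded, and its concrete claim is also false: all four Segre pentagrams of Figure \ref{ACC6}, viewed as unlabelled diagrams, admit a reflection symmetry (the convex pentagon, the five-pointed star, and two diagrams each symmetric about an axis through a vertex), so your criterion would declare every type achiral --- indeed the paper only ever speaks of \emph{coarse} pentagrams, precisely because the construction does not define them beyond rotation and reflection. Chirality of Schl\"afli sixes is invisible to planar data and requires a genuinely three-dimensional argument: for instance, nonvanishing of the signature of Subsection \ref{tln} shows the hexagonal and tripartite classes are chiral (e.g. $\sign(J(123456))=20$), while the icosahedral case, whose signature vanishes, needs Mazurovski\u\i's finer invariants. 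The paper instead derives the theorem from Segre's deformation classification of real double sixes together with his monodromy computation: the complementary six $\L'$ is always a mirror partner of $\L$, so $\L$ is achiral precisely when the monodromy of its double six interchanges the two sixes, which happens only in the bipartite case. Finally, even the first half of the statement is not established by your text: step 2, the enumeration of chambers and the proof of their connectedness, is exactly the hard part and is only asserted (``should show''); if completed it would prove the existence of four coarse classes, but nothing about chirality.
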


\begin{remark}
The work of Segre \cite{Se} does not contain in fact an explicit classification of real Schl\"afli sixes:
it contains instead a deformation classification of real double sixes in \textsection 56 (existence of four deformation classes).
Along with his description of the monodromy (permutations of their lines under deformations), it implies that
a pair of Schl\"afli sixes forming a double six must be mirror partners (and in particular, must be of the same kind),
which immediately implies Theorem \ref{Segretheorem}.
\end{remark}

\subsection{Join and spindle configurations}
Returning to the definition of $J(\s)$ for $\s\in S_n$,
note that neither a cyclic shift of numeration of the points $p_i\in L^p$,
nor such a shift for points $q_i\in L^q$ may change the class $[J(\s)]$.
In other words, $[J(c_1\s c_2)]=[J(\s)]$, where $c_1,c_2\in S_n$ are cyclic permutations. So, $J(\s)$
depends only on the orbit denoted $[\s]$ of the corresponding action of $\Z_n\times \Z_n$ on $S_n$.
By analogy with the Segre pentagrams, we can associate to $\s\in S_n$ a {\it permutation diagram}
\begin{figure}[h]
\centering
\subfigure[]{\scalebox{0.4}{\includegraphics{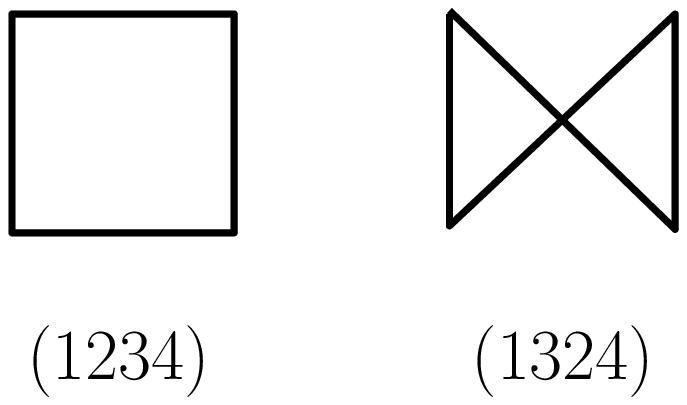}}}\hspace{1cm}
\subfigure[]{\scalebox{0.4}{\includegraphics{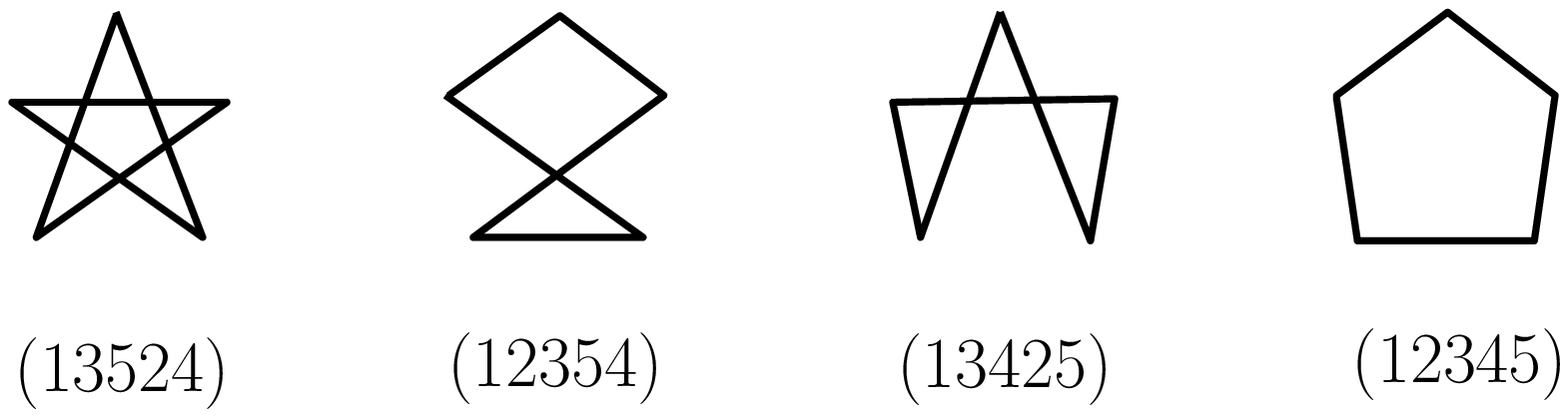}}}\\
\vskip-2mm
\caption{Quadrilateral and pentagonal coarse diagrams (pentagrams).}
\vskip-3mm
\label{ngon}
\end{figure}
formed by the vertices of a regular $n$-gon which are consecutively numerated by $1,2,\dots, n$ in counterclockwise direction and
are connected by a broken line $\s(1),\s(2),\dots,\s(n),\s(1)$.
Passing from $\s$ to its class $[\s]$ means that we forget the numeration of the vertices and consider such diagrams equivalent if they differ by
a rotation.
The coarse deformation class $\la J(\s)\ra$ can be similarly encoded by a {\it coarse diagram},
in which we forget also the direction of edges and which we do not distinguish from its reflection across a line,
see examples for $n=4,5$  on Figure ~\ref{ngon} and for $n=6$ on Figure \ref{hexagrams}.

We say that a configuration $\L=\{L_1,\dots,L_n\}$ of skew lines is a {\it spindle configurations}, if its lines have one common transversal called the {\it axis} of $\L$.
Such  a configuration is deformation equivalent to a join configuration $J(\s)$ where $\s\in S_n$ can be found like in the definition of Segre for $n=5$.
Namely, let $\pi_i$ denote the plane spanned by $L_i$ and the axis, $L^s$, of a spindle configuration $\L$.
Let us choose the numeration of lines $L_i$ so that $\pi_i$ appear consecutively in the pencil of planes rotating around axis $L^s$
in the negative direction with respect to some chosen orientation of $L^s$.
Then points $s_i=L^s\cap L_i$, $i=1,\dots, n$, follow on line $L^s$ in some order $s_{\s(1)},\dots,s_{\s(n)}$ in the positive direction of $L^s$
which defines a permutation $\s\in S_n$ (up to cyclic shifts, like in the case of join configurations).

\begin{proposition}\label{spindle-to-join}
If the above construction associates to a spindle configuration $\L=\{L_1,\dots,L_n\}$ a permutation $\s\in S_n$,
then $[\L]=[J(\s)]$.
\end{proposition}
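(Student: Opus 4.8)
The plan is to turn the spindle configuration into a join by creating a second common transversal through an explicit deformation, exploiting the pencil of planes through the axis. First I would keep the oriented axis $L^s$ from the construction and choose an auxiliary line $L^q$ skew to $L^s$. Because $L^q$ is not coplanar with $L^s$, each plane $\pi_i$ meets $L^q$ in a single point $q_i'=\pi_i\cap L^q$ with $q_i'\notin L^s$; moreover the planes $\pi_i$ are pairwise distinct (skew lines $L_i,L_j$ are not coplanar), so the points $q_i'$ are pairwise distinct as well. Since $L_i\subset\pi_i$ passes through $s_i$, the line $L_i$ meets $L^q$ exactly when it passes through $q_i'$.

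The heart of the argument is a simultaneous pivoting deformation. For each $i$ I would rotate $L_i$ inside its own plane $\pi_i$, always through the fixed point $s_i$, moving it from its initial position to the line joining $s_i$ and $q_i'$; this rotation is performed along the arc of lines through $s_i$ in $\pi_i$ that avoids $L^s$, which is possible since neither the initial line nor the target line coincides with $L^s$ (the target meets $L^q$ at $q_i'\notin L^s$). The decisive feature is that this motion automatically stays in $\CCC_n$: for $i\ne j$ one has $\pi_i\cap\pi_j=L^s$, since both planes contain the axis and are distinct, so any common point of $L_i$ and $L_j$ must lie on $L^s$; but on $L^s$ the lines $L_i$ and $L_j$ meet $L^s$ only at $s_i\ne s_j$ respectively. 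Hence the lines stay pairwise disjoint throughout, with no condition on $L^q$ imposed during the motion. At the end every $L_i$ passes through $s_i\in L^s$ and through $q_i'\in L^q$, so $L^s$ and $L^q$ are two common transversals and the result is a join configuration.

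It then remains to identify which join this is. Taking the oriented axis $L^s$ as the first transversal, its intersection points occur in the order $s_{\s(1)},\dots,s_{\s(n)}$ by the definition of $\s$, so the $k$-th of them lies on the line $L_{\s(k)}$. Taking $L^q$ as the second transversal, the points $q_i'$ occur along $L^q$ in the index order $1,\dots,n$: projecting $L^q$ from $L^s$ identifies the pencil of planes through $L^s$ with $L^q$ monotonically, and the pencil order is the index order by construction. Consequently the line through the $k$-th point of $L^s$ meets $L^q$ at its $\s(k)$-th point, which is precisely the incidence pattern of $J(\s)$; the residual cyclic-shift ambiguity is harmless because $[J(c_1\s c_2)]=[J(\s)]$.

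I expect the only delicate point to be the orientation bookkeeping in this last step, namely ruling out the mirror partner $\bar J(\s)$. The auxiliary line $L^q$ admits two orientations, and one must check that the orientation making $q_1',\dots,q_n'$ increase (so as to match the pencil order, which is read in the negative rotational direction about the oriented $L^s$) forces $\lk(L^s,L^q)<0$, as demanded in the definition of $J(\s)$. This sign compatibility is exactly what the orientation conventions built into the definitions of $\s$ and of join configurations are arranged to guarantee; verifying it is the main obstacle, whereas the skewness throughout the deformation comes for free from $\pi_i\cap\pi_j=L^s$.
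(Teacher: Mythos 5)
Your proposal is correct and takes essentially the same route as the paper: the paper's proof also pivots each $L_i$ inside its plane $\pi_i$ about the fixed point $s_i$, staying away from the axis $L^s$, until it passes through the intersection point of $\pi_i$ with an auxiliary line, and then reads off the permutation. Your additional care (disjointness via $\pi_i\cap\pi_j=L^s$, monotonicity of the pencil order along the auxiliary line, and the flagged sign check against the mirror partner $\bar J(\s)$) only elaborates details that the paper compresses into the phrase that the resulting join ``clearly realizes the same permutation $\s$.''
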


\begin{proof}
Note that rotation of each line $L_i$ around point $s_i$ in the plane $\pi_i$
does not lead to intersection with the other lines of $\L$ as long as we rotate apart from the axis $L^s$.
So, we can choose any line $L^p$ disjoint from $L^s$, and by such rotation of each $L_i$ make it pass
through the point $p_i=L^p\cap\pi_i$.
 The joint configuration that we obtain realizes clearly the same permutation $\s$ as $\L$.
\end{proof}
\begin{figure}[h]
\vskip-2mm\centering
\subfigure[Chiral]{\scalebox{0.28}{\includegraphics{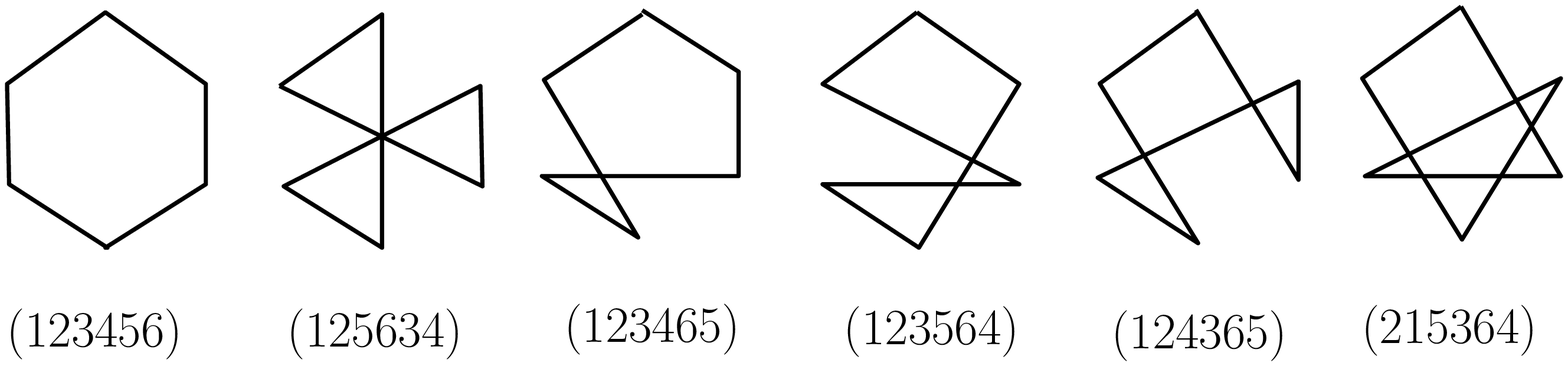}}}\hspace{1cm}
\subfigure[Achiral]{\scalebox{0.28}{\includegraphics{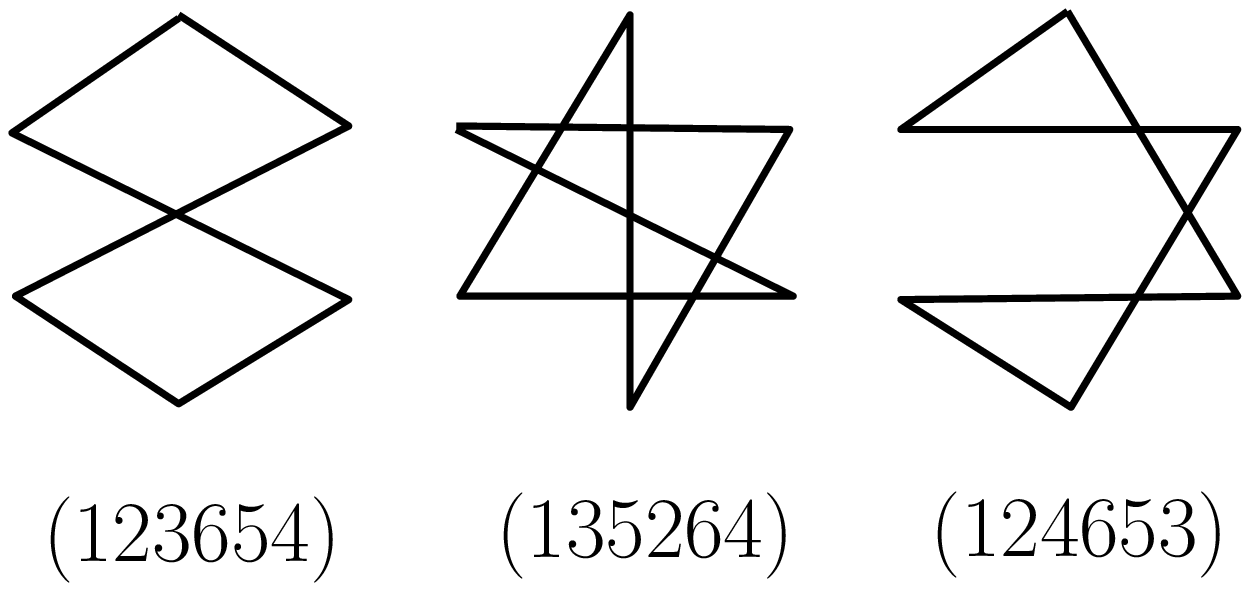}}}
\vskip-1mm\caption{Hexagonal coarse diagrams of join configurations of 6 lines}
\label{hexagrams}
\vskip-5mm\end{figure}

\subsection{Viro-Mazurovski\u\i~ classification of configurations $\L\in\CCC_n$, $n\le6$}\label{Viro-Masurovski}
The cases of $n\le3$ are quite trivial: space $\CCC_2$ is obviously connected and  $\CCC_3$ has two connected components: $[J(123)]$ and $[J(321)]$ (mirror partners).
The cases of $n=4,5$ were analyzed by O.~Viro (see \cite{DV}).
There are 3 deformation classes in $\CCC_4$: a pair of mirror partners $[J(1234)]$ and $[J(4321)]$,
and an achiral deformation class $[J(1243)]$.
In $\CCC_5$ there are 7 deformation classes and among them 3 pairs of mirror partners:
 $[J(12345)]$ with $[J(54321)]$, $[J(12354)]$ with $[J(45321)]$, $[J(13425)]$ with $[J(52431)]$,
and one achiral class $[J(13524)]$.

For $n=6$ a deformation classification was obtained by V.~Mazurovski\u\i~ \cite{Maz}.
\begin{theorem}[{\bf Mazurovski\u\i}]\label{Mazurovski}
Space $\CCC_6$ has $19$ deformation classes. Among them 4
do not contain join configurations, namely, classes $[M]$, $[L]$
of the configurations $M$ and $L$ shown on Figure ~\ref{ML}, and their mirror partners $[\bar M]$ and $[\bar L]$.
The remaining 15 classes include 6 mirror partners, forming together 6 coarse classes
$\la J(123456)\ra$,  $\la J(214365)\ra$,   $\la J(123465)\ra$,
$\la J(123564)\ra$, $\la J(124365)\ra$,  $\la J(215364)\ra$,
and 3 achiral deformation classes
$[J(123654)]$, $[J(135264)]$,  and $[J(124653)]$. So, it gives 11 coarse deformation classes.
\end{theorem}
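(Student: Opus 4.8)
The plan is to prove the classification in two complementary directions: a \emph{lower bound}, producing at least $19$ pairwise non-deformable configurations (equivalently $11$ coarse classes) by means of deformation invariants, and an \emph{upper bound}, showing by an explicit reduction that every configuration in $\CCC_6$ deforms to one on the list. The organizing dichotomy is between configurations that, possibly after a deformation, acquire a common transversal and those that do not. For the former, Proposition~\ref{spindle-to-join} identifies the configuration with a join $J(\sigma)$, so they are captured by the combinatorics of permutation diagrams; the latter should comprise exactly the four exceptional classes $[M]$, $[\bar M]$, $[L]$, $[\bar L]$.

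For the lower bound I would compute invariants of $[\L]$ and $\langle\L\rangle$. The crudest is the unordered collection of pairwise linking numbers $\lk(L_i,L_j)=\pm1$, viewed as a signed graph up to relabeling; this is a coarse-deformation invariant and already separates several candidate classes. To split the remaining ones, and in particular to detect chirality (\ie to certify $[\L]\ne[\bar\L]$), I would use the Kauffman/Jones-type polynomial that Mazurovski\u\i~ associates to the projected link diagram of a configuration: under a mirror reflection it is sent to its $t\mapsto t^{-1}$ substitution, so a non-palindromic value certifies a chiral class while a palindromic value is consistent with achirality. A useful supplementary family of invariants comes from deletion: for each $i$, the $\CCC_5$-class of $\L\sm\{L_i\}$ is determined by $[\L]$, and the $n\le5$ classification recalled in Subsection~\ref{Viro-Masurovski} lets one read off a ``deletion signature''. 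Evaluating these invariants on the six chiral join diagrams, the three palindromic ones, and on $M$, $L$ should yield $11$ distinct coarse values with exactly the claimed chirality pattern; this simultaneously shows that $M$ and $L$ are not joins.

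For the upper bound I would first dispose of configurations with a common transversal: any such configuration is a spindle, hence by Proposition~\ref{spindle-to-join} deforms to some $J(\sigma)$, and one enumerates the orbits of $S_6$ under the $\Z_6\times\Z_6$-action (cyclic relabeling on each transversal) together with mirror symmetry. To pass from these orbits down to the nine coarse join classes on the list one must realize the non-obvious coincidences $[J(\sigma)]=[J(\tau)]$ by explicit \emph{elementary deformations}, moving one line across the momentary intersection point of two others, the analogue of a triangle move in the permutation-diagram picture. To handle configurations with no transversal, and to organize the whole analysis, I would use the forgetful fibration $\CCC_6\to\CCC_5$, $\L\mapsto\L\sm\{L_6\}$: over a fixed $5$-line configuration the admissible positions of a sixth skew line form the complement, in the Grassmannian $\mathbb G(1,3)$ of lines, of the five incidence hypersurfaces, and the connected components of this complement enumerate the ways to extend each of the seven $\CCC_5$-classes. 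Tracking which components fuse under deformation of the base and which yield a common transversal reduces everything to the listed normal forms, with the genuinely transversal-free components producing $M$, $\bar M$, $L$, $\bar L$.

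The main obstacle is the completeness of this upper bound: one must verify that the set of elementary moves, together with the component analysis over $\CCC_5$, is exhaustive, so that no further coincidences among join classes are missed and no additional transversal-free class escapes detection. This is the delicate, combinatorially heavy step, since the invariants of the lower bound only guarantee that \emph{distinct} values mean distinct classes; they cannot by themselves prove that two configurations with equal invariants are deformable. Completing the proof then amounts to matching the upper-bound list exactly against the $11$ coarse values found above, after which the count assembles as claimed: six chiral coarse join classes ($12$ deformation classes), plus three achiral join classes, plus the two chiral coarse classes $\langle M\rangle$, $\langle L\rangle$ ($4$ deformation classes), giving $19$ deformation classes and $11$ coarse classes.
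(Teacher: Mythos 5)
Your proposal cannot be compared against an argument in the paper, because the paper does not prove this statement at all: Theorem~\ref{Mazurovski} is quoted as an external result of Mazurovski\u\i~ \cite{Maz}, and the whole paper is built on top of it. So the real question is whether your blind attempt would constitute a proof, and it would not: it is a program with the decisive step left open. The lower bound (distinctness of the $19$ classes) via invariants is plausible and close in spirit to what is actually done in \cite{DV}, \cite{Maz} (signatures, signature spectra, a Jones-type polynomial for $M$ and $L$), but the upper bound --- that \emph{every} configuration of six skew lines deforms into one of the listed normal forms, and that the elementary moves plus the component analysis of the fibration $\CCC_6\to\CCC_5$ are exhaustive --- is precisely the content of Mazurovski\u\i's theorem. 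You name this as ``the main obstacle'' and then stop; but acknowledging the gap does not close it. Enumerating the connected components of the complement of five incidence hypersurfaces in the Grassmannian, over each of the seven classes of $\CCC_5$, and tracking their fusion under monodromy of the base, is a heavy combinatorial-geometric analysis, and without it the count $19$ is only a conjecture consistent with your invariants.

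There is also a concrete technical error in your lower bound. The ``unordered collection of pairwise linking numbers $\lk(L_i,L_j)=\pm1$, viewed as a signed graph up to relabeling'' is not a well-defined invariant: lines in $\Rp3$ are unoriented, and reversing the orientation of one line flips the sign of its linking index with every other line. Any assignment of signs to the edges at a fixed vertex can thus be altered by a switching move, so the signed graph is only defined up to switching equivalence --- and the complete invariants of that equivalence class are exactly the triple products $\lk(L_i,L_j,L_k)$. This is why Viro introduced the triple linking index and why the paper works with signatures and signature spectra (Subsections~\ref{tln} and~\ref{dlc}) rather than pairwise linkings. Your other invariants (the Jones-type polynomial, the deletion spectrum) are the right tools, but they are asserted, not computed; for a classification theorem the computation is the proof.
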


\subsection{Triple linking indices}\label{tln}
In this subsection we
denote by $\overrightarrow{L}$ a line $L$ equipped with some orientation.
Recall that the linking number of a pair of skew lines $\overrightarrow{L_{1}},\overrightarrow{L_{2}}$ in $\Rp3$
(that has a canonical orientation)
is $\pm\frac{1}{2}$. To avoid fractions, we consider its double, $\lk(\overrightarrow{L_{1}},\overrightarrow{L_{2}})\in\{\pm1\}$ and call it the \textit{linking index} of
$\overrightarrow{L_{1}}$ and  $\overrightarrow{L_{2}}$.
Note that reversing the orientation of $\overrightarrow{L_1}$ or $\overrightarrow{L_2}$ leads to alternation of this index,
which implies that the following {\it triple linking index} introduced by Viro \cite{DV} for skew lines $L_{1}$, $L_{2}$, and $L_{3}$
$$\lk(L_{1},L_{2},L_{3})=\lk(\overrightarrow{L_{1}},\overrightarrow{L_{2}})\cdot
\;\lk(\overrightarrow{L_{1}},\overrightarrow{L_{3}})\cdot
\;\lk(\overrightarrow{L_{2}},\overrightarrow{L_{3}})$$
is independent of the orientations of lines $L_{i}$ and of their order (cf. \cite{DV}).

\begin{lemma}\label{permutation-linking}
Assume that  $J(\s)=\{L_1,\dots,L_n\}$ is a join configuration, $\s\in S_n$. Then
for any $1\le i<j<k\le n$, we have
$$\lk(L_i,L_j,L_k)=(-1)^{r_{\s(i)\s(j)\s(k)}}$$
where $r_{abc}$ is the number of inversions of order in the sequence $a,b,c$.
\end{lemma}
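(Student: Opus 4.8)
The plan is to reduce the triple index to the three pairwise linking indices and to evaluate each of the latter from an explicit model of the join. By its very definition $\lk(L_i,L_j,L_k)$ is the product $\lk(\overrightarrow{L_i},\overrightarrow{L_j})\,\lk(\overrightarrow{L_i},\overrightarrow{L_k})\,\lk(\overrightarrow{L_j},\overrightarrow{L_k})$ and is independent of the chosen orientations, so I am free to fix one orientation and compute the three factors separately. I orient every line $L_i$ from its point $p_i\in L^p$ towards its point $q_{\s(i)}\in L^q$, which pushes all the work into the pairwise indices $\lk(\overrightarrow{L_i},\overrightarrow{L_j})$ with $i<j$.

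First I would place the configuration in an affine model of $\Rp3$: take $L^p$ and $L^q$ to be two skew lines, say $L^p=\{(a,0,0)\}$ and $L^q=\{(0,b,1)\}$, and choose the marked points $p_i=(a_i,0,0)$, $q_j=(0,b_j,1)$ with $a_1<\dots<a_n$ and $b_1<\dots<b_n$, so that they are numbered consecutively along the two rails. Then $L_i$ is the chord from $(a_i,0,0)$ to $(0,b_{\s(i)},1)$, with direction vector $v_i=(-a_i,b_{\s(i)},1)$. Writing the linking index of two oriented skew lines as the crossing sign $\sign\det[\,v_i,\ v_j,\ p_j-p_i\,]$, the third column $p_j-p_i=(a_j-a_i,0,0)$ has only one nonzero entry, and the determinant collapses to $(a_j-a_i)(b_{\s(i)}-b_{\s(j)})$. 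Since $i<j$ forces $a_j-a_i>0$, the sign is governed entirely by the comparison of $\s(i)$ and $\s(j)$: it takes one fixed value when $\s(i)<\s(j)$ and the opposite value when $\s(i)>\s(j)$. Thus $\lk(\overrightarrow{L_i},\overrightarrow{L_j})=\epsilon\,(-1)^{\iota_{ij}}$, where $\iota_{ij}\in\{0,1\}$ records whether the pair $(i,j)$ is an inversion of $\s$ and $\epsilon\in\{\pm1\}$ is a global constant not depending on $i,j$.

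To assemble the answer I would multiply the three factors for $i<j<k$, obtaining $\lk(L_i,L_j,L_k)=\epsilon^3(-1)^{\iota_{ij}+\iota_{ik}+\iota_{jk}}=\epsilon\,(-1)^{r_{\s(i)\s(j)\s(k)}}$, since the total number of inversions among the three positions is exactly $\iota_{ij}+\iota_{ik}+\iota_{jk}=r_{\s(i)\s(j)\s(k)}$. It remains to verify that $\epsilon=+1$, and this is the one delicate point. The constant $\epsilon$ is fixed by the convention, built into the definition of $J(\s)$, that $L^p$ and $L^q$ are oriented with $\lk(L^p,L^q)$ negative and that the points are numbered consecutively with respect to these orientations; evaluating the same crossing-sign determinant on the pair $(L^p,L^q)$ in the chosen model, and comparing it — together with the correct orientation convention relating this combinatorial sign to the topological $\Rp3$ linking index — pins down $\epsilon$. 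I expect this sign bookkeeping (matching the determinant crossing sign to the linking index of the paper while enforcing $\lk(L^p,L^q)<0$) to be the only genuine obstacle; once it is settled, the identity $\lk(L_i,L_j,L_k)=(-1)^{r_{\s(i)\s(j)\s(k)}}$ is immediate. As a safeguard I would cross-check the outcome on the small cases $n=3,4,5$ against the deformation classes listed in Subsection~\ref{Viro-Masurovski}.
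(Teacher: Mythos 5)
Your skeleton is the same as the paper's: the triple index factors by definition into the three pairwise indices, and everything reduces to the claim that, under the numbering/orientation convention built into $J(\sigma)$, one has $\lk(L_i,L_j)=+1$ exactly when $\sigma(i)<\sigma(j)$ (the paper's entire proof is this one sentence). Your explicit model makes the relative half of that claim rigorous: the computation $\det[v_i,v_j,p_j-p_i]=(a_j-a_i)(b_{\sigma(i)}-b_{\sigma(j)})$ is correct, it shows the pairwise index depends only on whether $(i,j)$ is an inversion of $\sigma$, and the count $\iota_{ij}+\iota_{ik}+\iota_{jk}=r_{\sigma(i)\sigma(j)\sigma(k)}$ is right.

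The gap is that you stop exactly at the point that carries the whole content of the lemma: the value of $\epsilon$. Since $\epsilon^3=\epsilon$, an undetermined $\epsilon$ means the statement is proved only up to a global sign; with $\epsilon=-1$ the lemma would read $\lk(L_i,L_j,L_k)=-(-1)^{r_{\sigma(i)\sigma(j)\sigma(k)}}$, which is false. The good news is that the consistency check you sketch does close it, and does so without ever deciding which sign convention relates the determinant to the linking index in the canonically oriented $\Rp3$. Let $c\in\{\pm1\}$ be the universal constant with $\lk(\overrightarrow{\ell_1},\overrightarrow{\ell_2})=c\,\sign\det[v_1,v_2,x_2-x_1]$ for oriented skew affine lines. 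In your coordinates the rails oriented along increasing $a$ and increasing $b$ give determinant $+1$, hence $\lk(L^p,L^q)=c$. If $c=-1$, your model is a legitimate $J(\sigma)$ and $\lk(L_i,L_j)=c\cdot\sign\bigl(b_{\sigma(i)}-b_{\sigma(j)}\bigr)=+1$ precisely on non-inversions. If $c=+1$, the convention forces you to reverse the orientation of $L^q$ and hence to renumber its marked points with decreasing $b$-coordinates; redoing the same two-line computation again gives $+1$ precisely on non-inversions. Either way $\epsilon=+1$: the requirement $\lk(L^p,L^q)<0$ exactly absorbs the unknown constant, and the lemma follows unconditionally. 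Note also that your proposed fallback of cross-checking small cases against Subsection~\ref{Viro-Masurovski} cannot settle $\epsilon$, because which member of each mirror pair is labelled $J(\sigma)$ rather than $\bar J(\sigma)$ in that classification is fixed by the very same convention you are trying to pin down, and the signature values quoted there are themselves computed from this lemma; that check is circular.
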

\begin{proof}
Our convention on the orientations of $L^p$ and $L^q$ implies that
$\lk(L_i,L_j)$ is $1$ if $\s(i)<\s(j)$ and $-1$ otherwise.
\end{proof}

For $\L\in\CCC_n$ we can count the numbers $n_\pm(\L)$ of triples $\{L_1,L_2,L_3\}\subset\L$ with $\lk(L_{1},L_{2},L_{3})=\pm1$
and call the difference $\sign(\L)=n_{+}(\L)-n_{-}(\L)$
the \textit{signature} of $\L$.
It is clearly a deformation invariant of configurations $\L\in\CCC_n$, and $\sign(\bar \L)=-\sign(\L)$ for a mirror partner $\bar \L$ or $\L$.
For $n=5$ this invariant turns out to be complete: it is different for different deformation classes $[\L]\in[\CCC_5]$.
Namely, for $J(\s)$ where $\s$ is $(12345)$, $(12354)$, $(13425)$, and $(13524)$ the signature is respectively $10$, $4$, $2$, and $0$,
and for $\bar J(\s)$ the signature is opposite.

\subsection{Internal adjacency and reducibility}\label{adjacency}
Following \cite{DV}, we call a pair of lines $L_i$, $L_j$ of $\L=\{L_1,\dots,L_n\}\in\CCC_n$ \textit{internally adjacent} in $\L$ if
there exists a continuous family $\L(t)=\{L_1(t),\dots,L_n(t)\}$, $t\in[0,1]$, of configurations
such that $\L(0)=\L$, $\L(t)\in\CCC_n$ for $t\in[0,1)$, and the limit configuration $\L(1)$ has $n-1$ skew lines,
one of which is double line $L_i(1)=L_j(1)$ (result of merging).
A configuration $\L$ is called {\it reducible}
if it contains an internally adjacent pair of lines, and {\it irreducible} otherwise.

One can easily show that a join configuration $J(\s)$
is irreducible if and only if $|\s(i+1)-\s(i)|>1$ for $i=1,\dots, n$ (here, $\s(n+1)$ is $\s(1)$),
which means that a permutation diagram of $\s$ does not contain a side of the $n$-gon.
In particular, from Figure \ref{hexagrams} one can conclude that irreducible join configurations in $\CCC_6$ form only one deformation class $\la J(135264)\ra$
(cf. \cite{Maz}).
It was also observed in \cite{Maz} that the configurations $\CL$, $\CM$ and their mirror partners are irreducible too.

\subsection{Signature spectrum}\label{dlc}
By the {\it signature spectrum} of a configuration  $\L=\{L_1,\dots,L_n\}\in\CCC_n$, we mean the list of signatures of its subconfigurations
$\L\sm\{L_i\}$, $i=1,\dots,n$: each value of signature appears with a multiplicity indicated how many times it occurs, so that
the sum of all multiplicities is $n$.
Clearly, the signature spectrum is a deformation invariant of $\L$ that refines $\sign(\L)$ and can be easily calculated for join configurations using Lemma \ref{permutation-linking}.
As an invariant of coarse deformation equivalence, we can use {\it coarse signature spectrum} in which instead of signatures of subconfigurations $\L\sm\{L_i\}$ we list only their absolute values.
The table on Figure \ref{alldeformationclasses} contains the results of calculation of such spectra for 6-configurations:
the last column contains {\it pentagram spectra}, in which pentagrams of 5-subconfigurations replace the corresponding signatures (see Subsection \ref{tln}).

\begin{figure}[ht]
\centering
{\scalebox{0.33}{\includegraphics{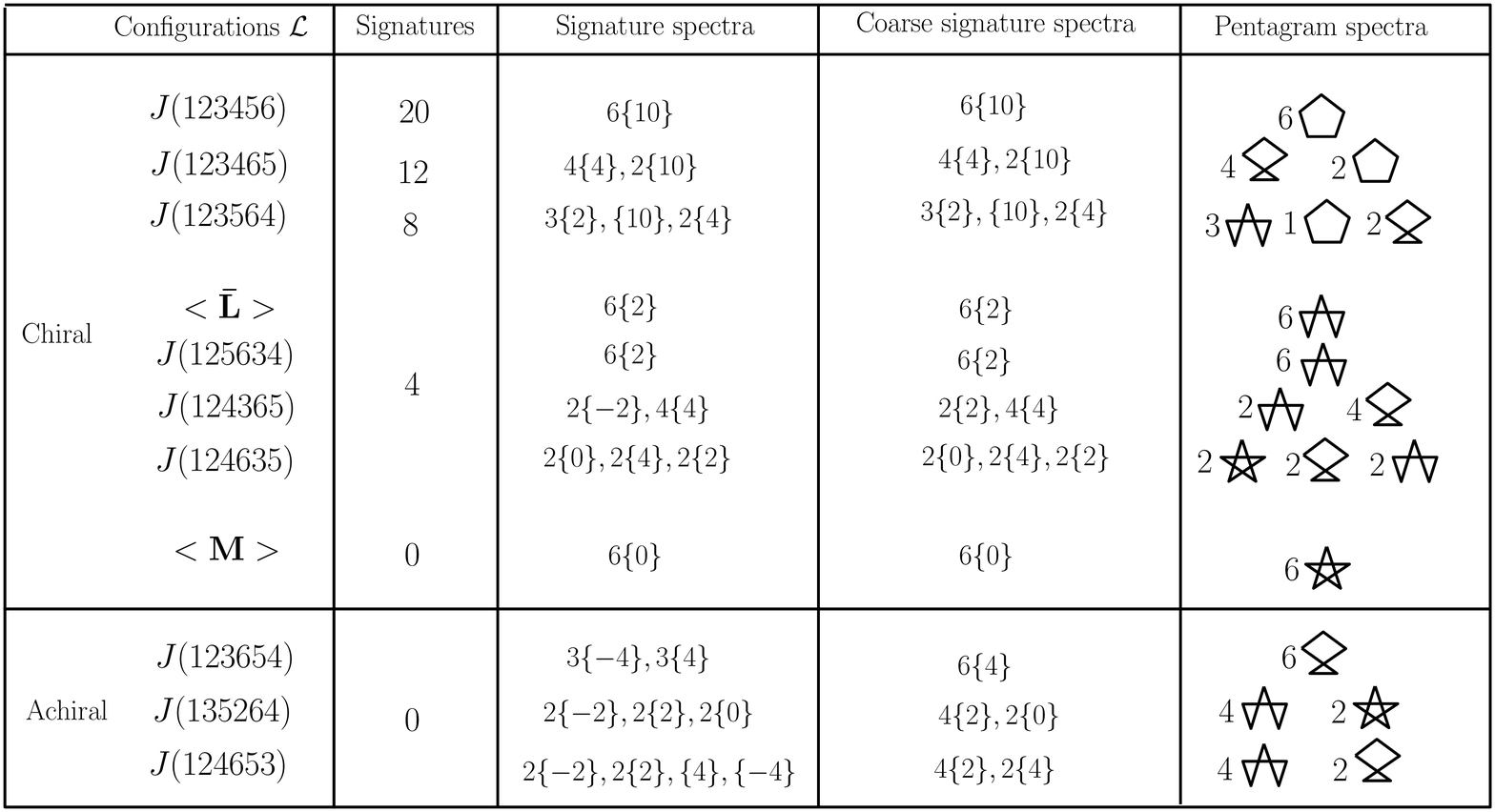}}}
\caption{Deformation invariants of configurations $\L\in\CCC_6$}
\label{alldeformationclasses}
\vskip-3mm\end{figure}

\section{Proof of Theorem~\ref{maintheorem}}\label{cubicsurface}

\subsection{Homogeneity of Schl\"afli sixes}\label{homogeneity}
{\it Homogeneity} of $\L$ means the following.
\begin{proposition}\label{homogeniety-proposition}
For any real Schl\"afli six $\L=\{L_1,\dots,L_6\}$,
all 6 subconfigurations  $\L\sm\{L_i\}$, $i=1,\dots,6$,
are coarse deformation equivalent.
\end{proposition}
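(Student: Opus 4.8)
The plan is to exploit the fact that each subconfiguration $\L_i=\L\sm\{L_i\}$ is a spindle configuration. Indeed, by the defining property of a Schl\"afli six, the five lines of $\L_i$ share the common transversal $L_i'$, the $i$-th line of the complementary six $\L'$. Hence, by Proposition~\ref{spindle-to-join}, $[\L_i]=[J(\sigma_i)]$ for a permutation $\sigma_i\in S_5$ obtained by Segre's rule applied to the spindle $\L_i$ with axis $L_i'$. Since in $\CCC_5$ the signature is a complete coarse invariant, with its values $0,2,4,10$ separating the four coarse classes, the coarse class $\la\L_i\ra$ is determined by the pentagram of $\sigma_i$, equivalently by $|\sign(\L_i)|$. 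Thus homogeneity reduces to showing that these six pentagrams coincide as coarse diagrams.

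It is convenient to prove this for the complementary six $\L'$ rather than for $\L$ itself: the two are of the same kind, hence lie in the same coarse class by Theorem~\ref{Segretheorem}, and homogeneity is preserved under deformation and mirroring, so it is a property of the coarse class. Passing to the blow-down model, $\L$ becomes six points $P_1,\dots,P_6$ in general position on $\Rp2$, the lines $L_k$ becoming the exceptional curves $E_k$ and the complementary lines $L_k'$ the conics through the five points $\{P_j:j\ne k\}$. The subconfiguration $\L'\sm\{L_k'\}$ is then the spindle with axis $E_k$, and its pentagram is exactly the Segre pentagram computed around the point $P_k$ by the combinatorial rule recorded after Figure~\ref{ACC6}: order the five points $P_j$, $j\ne k$, by the pencil of lines through $P_k$, and read off their cyclic order along the conic through them. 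Homogeneity becomes the assertion that this rule yields the same coarse pentagram for every choice of distinguished point $P_k$.

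The crux is symmetry. For each of the four kinds the inseparability graph on Figure~\ref{adjgraph6} is vertex-transitive: the hexagon for the hexagonal kind, and the bipartite, tripartite and icosahedral graphs each carry an automorphism group acting transitively on their six vertices. I would realize this transitivity geometrically by choosing, inside each kind, a representative six-point configuration whose projective automorphism group already permutes $P_1,\dots,P_6$ transitively --- for instance the classical $A_5$-symmetric configuration for the icosahedral kind, an orbit of an order-six projective transformation (taken off its invariant conic, so as to keep the six points in general position) for the hexagonal kind, and analogous symmetric models for the bipartite and tripartite kinds. For such a representative a projective symmetry carrying $P_k$ to $P_{k'}$ carries the pentagram built at $P_k$ to the one built at $P_{k'}$, so all six pentagrams coincide and the representative is homogeneous. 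Finally, a deformation inside $\Sch$ carries subconfigurations to subconfigurations, merely permuting their labels, so the multiset $\{\la\L_i\ra\}_{i=1}^6$ is a deformation invariant, constant on each kind by Theorem~\ref{Segretheorem}; since for the symmetric representative it consists of a single repeated class, the same holds for every real Schl\"afli six of that kind.

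The main obstacle lies in this last step: producing, in each of the four kinds, a representative that simultaneously realizes the transitive symmetry by projective transformations and respects general position, the tension being that the most symmetric point arrangements tend to be coconic, so one must arrange the orbit to avoid the invariant conic of the symmetry while keeping no three points collinear and the correct inseparability type. A possible alternative that bypasses the construction of symmetric models is to invoke Segre's description of the monodromy of real Schl\"afli sixes: if the monodromy of each kind acts transitively on the six lines, then a loop in $\Sch$ whose monodromy sends $i$ to $j$ deforms $\L_i$ into $\L_j$ within $\CCC_5$ and yields $\la\L_i\ra=\la\L_j\ra$ at once; the crux then becomes the verification of this transitivity.
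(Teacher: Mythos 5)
Your reduction steps are correct and coincide with the paper's own first step: each $\L_i=\L\sm\{L_i\}$ is a spindle with axis $L_i'$, Proposition \ref{spindle-to-join} turns it into a join $J(\s_i)$, the signature (equivalently the coarse pentagram) is a complete coarse invariant on $\CCC_5$, and by Theorem \ref{Segretheorem} homogeneity is a property of the kind, so one representative per kind suffices. The paper closes the argument at exactly this point by quoting Segre's Theorem \ref{homogeinety-theorem}; you instead try to prove that statement via symmetric models, and this is where the proposal breaks down: the symmetric representatives you need do not exist for the hexagonal kind (nor for the bipartite one). Any finite group of projective transformations of $\Rp2$ is conjugate into $PO(3)\cong SO(3)$ (average an inner product, using $PGL_3(\R)\cong SL_3(\R)$), so a transitive symmetry group of a six-point configuration is a finite rotation group acting on $\Rp2=S^2/\pm$. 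It also preserves the inseparability graph, so for a hexagonal configuration it acts through a transitive subgroup of $\operatorname{Aut}(\text{6-cycle})=D_6$, and hence the six points form a single orbit of a subgroup of $SO(3)$ isomorphic to $\Z_6$ or to the dihedral group $D_3$. But \emph{every} six-point orbit in $\Rp2$ of such a group lies on a conic or a line: for $\Z_6$ the orbits lie on latitude circles of the rotation axis, and for $D_3$ the upper-hemisphere representatives of an orbit again share a common latitude. So no hexagonal configuration in general position admits any transitive projective symmetry; in particular your parenthetical remedy ``taken off its invariant conic'' cannot be carried out, because the invariant conics of an order-six rotation foliate the whole plane and every orbit lies on one of them. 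The same analysis rules out the bipartite kind: the only transitive subgroups of its graph automorphism group $S_3\wr S_2$ that occur in $SO(3)$ are again $\Z_6$, $D_3$, $D_6$ ($A_4$ and $S_4$ do not embed transitively, and $A_5$ has the wrong order), all with coconic orbits. The tension you flagged is therefore not a technical nuisance to be finessed but an absolute obstruction for two of the four kinds.

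Your fallback --- transitivity of the monodromy on the six lines --- would indeed finish the proof (a loop carrying $L_i$ to $L_j$ deforms $\L_i$ onto $\L_j$), and it is essentially what the paper does, since its proof consists of invoking Segre's Theorem \ref{homogeinety-theorem}, which the Remark after Theorem \ref{Segretheorem} ties to Segre's monodromy description. But you explicitly leave that verification open, so as written the proposal is incomplete precisely at its central step. If you want a self-contained argument, the route sketched in the paper's concluding section works and avoids symmetry altogether: determine the chirality of each four-line subconfiguration $\L\sm\{L_i,L_j\}$ from the mutual position of the two conics through $\P\sm\{p_i\}$ and $\P\sm\{p_j\}$ (Figure \ref{chirality-via-conics}), and check that the resulting chirality graph (Figure \ref{chiralgraph6}) is regular for each of the four kinds; regularity of that graph is exactly homogeneity.
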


\begin{proof}
Each of the subconfigurations $\L\sm\{L_i\}$ is spindle with the axis $L_i'$ from the complementary Schl\"afli six $\L'=\{L_1',\dots,L_6'\}$.
As we mentioned in Subsection \ref{Segre-classification},
Segre proved that the associated permutations are equivalent for every $i=1,\dots,6$:
\begin{theorem}[{\bf Segre}]\label{homogeinety-theorem}
The six spindle subconfigurations $\L\sm\{L_i\}$
have the same associated coarse pentagrams that depend on the kind of  Schl\"afli six $\L$ as it is indicated on Figure \ref{ACC6}.
\qed\end{theorem}

Now it is left to apply
Proposition \ref{spindle-to-join} and to use the deformation classification of 5-line configurations by Viro.
\end{proof}


\subsection{Reducibility of Schl\"afli sixes}\label{reducibility}
For a Schl\"afli six $\L=\{L_1,\dots,L_6\}$ on a cubic surface $X$ we associated in Subsection \ref{Segre-classification}
a planar configuration $\P=\{p_1,\dots,p_6\}\subset \mathbb{P}^2$ obtained by contraction lines $L_i$ on $X$ to point $p_i$.
This construction is invertible: by blowing up plane $\mathbb{P}^2$ we obtain
a del Pezzo surface of degree 3 that can be embedded to $\mathbb{P}^3$ by anti-canonical linear system and
realized there as a cubic surface; the exceptional divisors of blowing up at $p_i$ become lines $L_i$.
This gives an isomorphism of the moduli spaces formed by projective classes of pairs $(X,\L)$, where cubic $X$ is non-singular,
and projective classes of 6-point configurations $\P\subset \mathbb{P}^2$, which are generic (contains no collinear triples of points
and all 6 points are not coconic).
GIT gives an extension of this isomorphism to cubic surfaces with a node (see \cite{D}, \textsection{8-9}, particularly Remark 9.4.1):
the corresponding configurations $\Cal P$ have either (a) a collinear triple, or (b) all 6 points coconic, or (c) a pair of merged points (in the sense of Hilbert schemes).
In the cases (a) and (b) the corresponding lines on $X_0$ remain distinct, but the corresponding triple in case (a) or all six in case (b) pass through the node.
In the case (c), the pair of lines that represent merging points should merge on the nodal cubic,
while the other 4 lines do not intersect neither each other nor the double line.
\begin{lemma}\label{reducibility}
Bipartite and tripartite Schl\"afli sixes are reducible configurations.
\end{lemma}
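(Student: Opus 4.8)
The plan is to read reducibility directly off the blow-down picture, exploiting the GIT description of the boundary recalled just above. Under the isomorphism of moduli spaces $(X,\L)\leftrightarrow\P$, I would argue that an internally adjacent pair of lines $L_i,L_j$ in $\L$ corresponds to a pair of points $p_i,p_j$ of $\P$ that can be brought together while the other four points stay in general position. Indeed, a real one-parameter family $\P(t)$ of generic $6$-point configurations along which $p_i(t)$ and $p_j(t)$ collide at $t=1$, with the limit of type (c), lifts to a family $\L(t)\in\CCC_6$ of skew sextuples whose lines $L_i(t),L_j(t)$ merge into a double line at $t=1$; by the case-(c) description the remaining four lines stay skew and disjoint from that double line, so by the definition in Subsection~\ref{adjacency} this is exactly an internal adjacency. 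Thus it suffices to exhibit, for a bipartite and for a tripartite $\P$, a pair of points admitting such a collision.

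Here the inseparability graph does the work. First I would recall that an edge $p_ip_j$ means, by definition, that the segment $\overline{p_ip_j}$ meets none of the six lines joining pairs among the remaining four points. Moving $p_j$ toward $p_i$ along this segment, with the other five points fixed, then never produces a collinear triple: a triple $\{p_j(t),p_k,p_l\}$ would force $p_j(t)\in\overline{p_kp_l}$, which the edge condition excludes, while a triple through $p_i$ would require the moving point to leave the line $\overline{p_ip_j}$, which happens only in the limit $p_j(t)=p_i$. Hence case (a) is avoided for all $t<1$ and the colliding limit is a double point.

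The only real obstacle I anticipate is ensuring the limit is genuinely of type (c) rather than the coconic degeneration (b). The conic $C$ through $p_i$ and the other four points may meet the open segment, and the limiting length-two scheme at $p_i$ could a priori be coconic with those four points; both are codimension-one conditions. I would remove them by a small perturbation of the path near the isolated crossing point of $C$ with $\overline{p_ip_j}$, and by a generic choice of the approach direction at $p_i$. Since such a perturbation can be confined to an arbitrarily small neighborhood of the segment, it does not reintroduce any collinearity ruled out by the edge condition, so the family stays in the generic locus for $t<1$ and lands in case (c) at $t=1$.

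Finally I would invoke Figure~\ref{adjgraph6}: the inseparability graphs of the bipartite and tripartite configurations both contain edges, so each admits a pair of points that can be collided as above. The corresponding lines then form an internally adjacent pair, and the bipartite and tripartite Schl\"afli sixes are reducible. The same reading shows that the icosahedral graph, having no edges, offers no such pair, consistently with the later identification of the icosahedral six with the irreducible configuration $\CM$.
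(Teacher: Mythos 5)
Your strategy is the same as the paper's: blow down to a six-point configuration, use the GIT description of the boundary of the moduli space, collide two points along an edge of the inseparability graph, and lift the family back to an internal adjacency of lines. Your treatment of the collinear degenerations (case (a)) also matches the paper's. But the step you yourself flag as ``the only real obstacle'' --- avoiding the coconic degeneration (case (b)) --- is exactly where your argument breaks down, and it is precisely the step where the paper does its real work.

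The gap is this: a transverse crossing of the conic $C$ (the one through the five fixed points) with the open segment cannot be removed by ``a small perturbation of the path near the isolated crossing point''. The conic $C$ is an oval, a closed separating curve in $\Rp2$; since $C$ meets the line through $p_i$ and $p_j$ in at most two points, one of which is the endpoint $p_i$, a crossing of the open segment is unique and transverse, and it places $p_j$ and the terminal portion of the segment on opposite sides of $C$. Consequently every path that agrees with the segment near $p_i$ --- indeed, every path approaching $p_i$ from that side of $C$ --- must cross $C$ in its interior: the intersection point is protected by a parity argument, so local perturbations only move it, they cannot cancel it. Removing it would force you to approach $p_i$ from the other side of $C$, a global modification of the path, after which your claim that no collinearity is reintroduced no longer follows from the edge condition (the detour has to get around the crossing and may be pushed across the ten lines through pairs of fixed points, four of which pass through $p_i$ itself). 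The paper avoids all of this by proving that the crossing simply cannot occur unless $\P$ is hexagonal: if the conic through the five fixed points met the open edge at an interior point $p_i'$, then replacing the moving point by $p_i'$ would give six points on a nonsingular conic, hence a hexagonal configuration, and sliding $p_i'$ back along the edge preserves hexagonality (Figure \ref{cross-conic}). This is exactly where the bipartite/tripartite hypothesis enters; in your proof that hypothesis is used only to guarantee that an edge exists, which is also true for hexagonal configurations, so the essential content of the lemma --- that non-hexagonality kills the coconic wall --- is never engaged. (Your second fix, a generic choice of approach direction at $p_i$ to rule out a tangential, coconic limit scheme, is reasonable as far as it goes; the fatal issue is the transverse interior crossing.)
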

\begin{proof}
Any given real Schl\"afli six $\L=\{L_1,\dots,L_6\}$ realized on some real cubic surface $X$ is represented by
a configurations of six points in general position, $\P=\{p_1,\dots,p_6\}\subset\Bbb P^2$ as explained above.
If $\L$ is not icosahedral, then the inseparability graph of $\P$ has some edge, $[p_ip_j]$ (see Figure \ref{adjgraph6}).
Let $p_i(t)$, $t\in[0,1]$, be a parametrization of this edge, so that $p_i(0)=p_i$, $p_i(1)=p_j$, and
$\P(t)\subset\Bbb P^2$ denote the configuration formed by the five {\it fixed points} of $\P\sm\{p_i\}$ and one {\it moving point} $p_i(t)$,
so that $\P(0)=\P$ and $\P(1)$ has a double point $p_j$.
The correspondence between pairs $(X,\L)$ and configurations $\P$ extended to $\P(t)$ yields a continuous family $(X(t),\L(t))$, $t\in[0,1]$,
$(X(0),\L(0))=(X,\L)$. Note that under our assumption, points of $\P(t)$ are in general position for any
$t\in[0,1)$. Namely, point $p_i(t)$ is not collinear with any pair of other points of $\P(t)$ by definition of the inseparability graph.
Also, all six points of $\P(t)$ cannot be coconic, unless configuration $\P$ is hexagonal (and this case is excluded from the Lemma).
To show the latter fact, note that for a conic passing through the 5 points of $\P\sm\{p_i\}$ and intersecting edge $[p_ip_j]$
at an interior point, $p_i'$, (in addition to point $p_j$), the configuration $(\P\sm\{p_i\})\cup\{p_i'\}$ is hexagonal since all the points lie on a non-singular conic,
and replacing $p_i'$ by $p_i$ keeps the configuration hexagonal, since $[p_ip_j]$ is an edge of the inseparability graph
(see Figure \ref{cross-conic}:
\begin{figure}[h]
\centering
{\scalebox{0.4}{\includegraphics{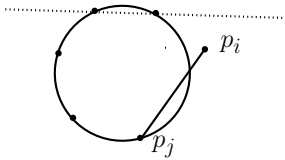}}}
\caption{Edge $[p_ip_j]$ that crosses the conic passing through the points of $\P\sm\{p_i\}$
cannot cross the dotted line, thus, $\P$ is hexagonal}
\label{cross-conic}
\vskip-1mm\end{figure}
the dotted line cannot cross the edge $[p_ip_j]$).

Thus, cubic surfaces $X(t)$ remain non-singular for $t\in[0,1)$
and configurations $\L(t)\subset X(t)$ remain Schl\"afli sixes.
These surfaces degenerate to a nodal cubic surface $X_1$, where $\L(1)$ contains a double line obtained by merging
of the lines $L_i$ and $L_j$ (corresponding to merging of points $p_i$ and $p_j$).
The other 4 lines do not intersect each other and the double line that we obtained.
 \end{proof}

\begin{remark}
Hexagonal Schl\"afli sixes are also reducible, while
icosahedral ones are not. Both facts follow from Theorem \ref{maintheorem}
(but not required for its proof).
\end{remark}

\subsection{Completion of the proof of Theorem \ref{maintheorem}}\label{summary}
The homogeneity property of Schl\"afli sixes established in Proposition \ref{homogeniety-proposition}
implies that their coarse spectra contain one value repeated 6 times.
There exist precisely 5 such coarse
deformation classes in the Mazurovski\u\i~ list in Theorem \ref{Mazurovski} (see Figure \ref{alldeformationclasses}).
For hexagonal, bipartite and icosahedral Schl\"afli sixes we have a unique homogeneous class from Mazurovski\u\i~ list with the same signature spectrum.
 For tripartite Schl\"afli sixes we have two candidates, classes $\la L\ra$ and $\la J(214365)\ra$, that have the same derivative signature list.
However, configuration $L$ (as thus, its mirror partner $\bar L$) is irreducible, as it was established in \cite{Maz}, so, using Lemma \ref{reducibility}
we may conclude that tripartite Schl\"afli sixes must represent coarse deformation class $\la J(214365)\ra$.
\qed

\section{Concluding remarks}

\subsection{An alternative proof of Segre Theorem \ref{homogeinety-theorem}}
With a configuration $\L=\{L_1,\dots,L_6\}\in \CCC_6$ we associate a {\it chirality graph} $\G_\L$, whose vertex set is $\L$, and which includes edge
$[L_iL_j]$, $1\le i<j\le 6$, if and only if the 4-line configuration $\L\sm\{L_i,L_j\}$ is chiral.
Let $\deg(L_i)$ denote the degree of vertex $L_i$ in graph $\G_\L$.
\begin{figure}[ht]\vskip-2mm
 \subfigure{\scalebox{0.45}{\includegraphics{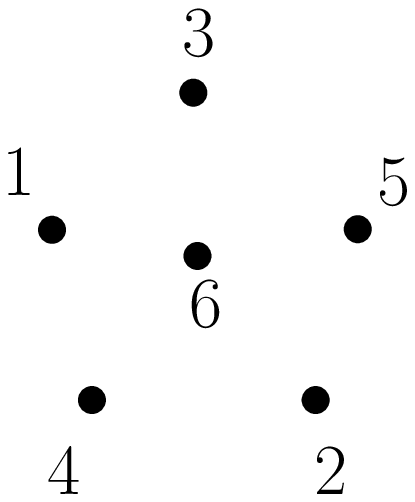}}}\hspace{0.7cm}
 \subfigure{\scalebox{0.45}{\includegraphics{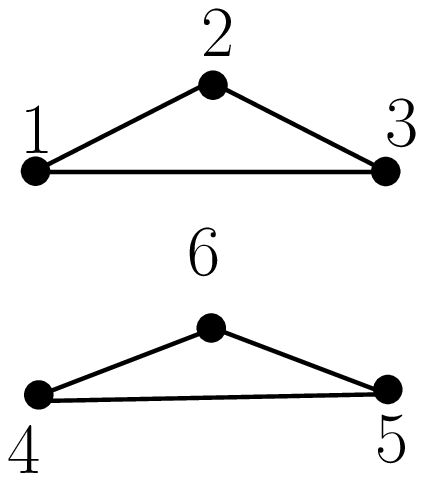}}}\hspace{0.7cm}
\subfigure{\scalebox{0.45}{\includegraphics{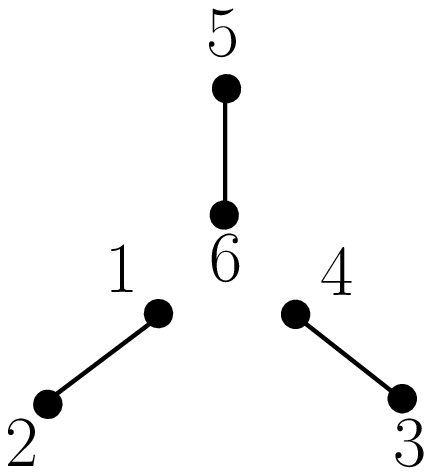}}}\hspace{0.7cm}
\subfigure{\scalebox{0.45}{\includegraphics{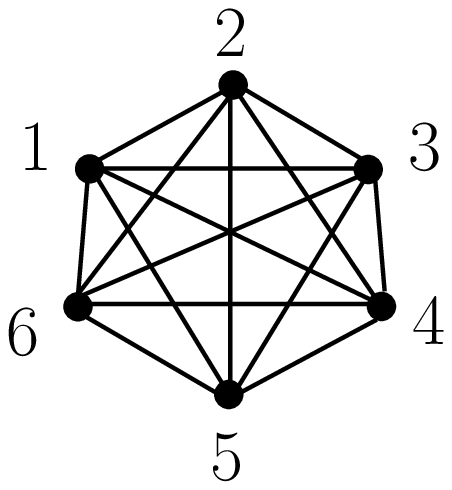}}}
\caption{Chirality graphs for the 4 kinds of real Schl\"afli sixes}
 \label{chiralgraph6}
\vskip-4mm\end{figure}
\begin{lemma}
$|\sign(\L\sm\{L_i\})|=2\deg(L_i)$.
\end{lemma}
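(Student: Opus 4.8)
The plan is to reduce the statement to a computation with triple linking indices on the five-line subconfiguration $\L\sm\{L_i\}\in\CCC_5$. Write $\L_i=\L\sm\{L_i\}$; its five four-line subconfigurations are exactly $\L_i\sm\{L_j\}=\L\sm\{L_i,L_j\}$ for $j\ne i$, and by the definition of the chirality graph $\deg(L_i)$ is precisely the number of these subconfigurations that are chiral. Since both sides of the claimed identity depend only on the deformation class $[\L_i]\in[\CCC_5]$, it suffices to verify the formula on one representative of each class.

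First I would record a \emph{double-counting identity}. Each triple of lines of $\L_i$ is contained in exactly two of the five subconfigurations $\L_i\sm\{L_j\}$ — namely those indexed by the two lines of $\L_i$ lying outside the triple — so summing the defining formula for $\sign$ over the five subconfigurations gives
\[
\sum_{j\ne i}\sign(\L\sm\{L_i,L_j\})=2\,\sign(\L\sm\{L_i\}).
\]
Next I would pin down the possible value of a single summand. By Viro's classification $\CCC_4$ has three deformation classes, and evaluating Lemma \ref{permutation-linking} on the representatives $J(1234)$ and $J(1243)$ gives $\sign=+4$ and $\sign=0$ respectively; by mirror symmetry the third class $J(4321)$ has $\sign=-4$. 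Thus each $\sign(\L\sm\{L_i,L_j\})$ equals $0$ when the four-line configuration is achiral and $\pm4$ when it is chiral. Consequently the right-hand side above is a sum of $\deg(L_i)$ terms, each equal to $\pm4$, and the lemma becomes equivalent to the assertion that all of these nonzero terms carry the \emph{same} sign, which would yield $|2\sign(\L\sm\{L_i\})|=4\deg(L_i)$.

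The same-sign assertion is the main obstacle, because it is not forced by the arithmetic alone: a combination such as $(+4,+4,+4,-4,0)$ also sums to $2\cdot 4$ yet would give $\deg(L_i)=4\ne 2$. To rule out such patterns one must use that $\L_i$ is a genuinely realizable five-line configuration. Here I would invoke Viro's classification of $\CCC_5$ (seven classes, four up to mirror, with $\sign$ a complete invariant) and compute, for a join representative $J(\s)$ of each class, the five subconfiguration signatures directly via Lemma \ref{permutation-linking}; in every case the nonzero values indeed share a common sign, and one reads off the correspondence $|\sign(\L_i)|\in\{0,2,4,10\}$ with $\deg(L_i)\in\{0,1,2,5\}$, matching the four pentagram types of Figure \ref{ACC6}. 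Combined with the double-counting identity this gives $|\sign(\L\sm\{L_i\})|=2\deg(L_i)$, as claimed.
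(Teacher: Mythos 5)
Your proposal is correct and, at its core, it is the paper's proof: both arguments note that each side of the identity is a deformation invariant of the five-line configuration $\L\sm\{L_i\}$ and then reduce to a finite verification over Viro's four classes of $[\CCC_5]$ (up to mirror), carried out on the join representatives $J(12345)$, $J(12354)$, $J(13425)$, $J(13524)$ via Lemma \ref{permutation-linking} — the paper simply tabulates $|\sign|=10,4,2,0$ against $5,2,1,0$ chiral $4$-subconfigurations. Your double-counting identity $\sum_{j\ne i}\sign(\L\sm\{L_i,L_j\})=2\,\sign(\L\sm\{L_i\})$ together with the observation that chiral (resp.\ achiral) quadruples contribute $\pm4$ (resp.\ $0$) is a pleasant extra layer of organization — it explains the factor $2$ and correctly isolates the only point where a case check is genuinely unavoidable (the same-sign assertion) — but it does not eliminate that check, so the essential content of the two proofs coincides.
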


\begin{proof}
It is sufficient to verify  that the absolute value $|\sign(\L')|$ for any
configuration $\L'=\{L_1,\dots,L_5\}\in \CCC_5$
is twice the number of 4-line chiral subconfigurations of $\L'$.
Due to Viro classification (see Subsection \ref{Viro-Masurovski})
it is enough to justify this claim for $J(12345)$ (five chiral 4-subconfigurations, signature $\sign=10$),
$J(12354)$ (two such subconfigurations and $\sign=4$),
$J(13425)$ (one such subconfiguration and $\sign=2$)
and $J(13524)$ (no such subconfigurations and $\sign=0$).
\end{proof}
It follows that $\L\in \CCC_6$ is a homogeneous configuration if and only if $\G_\L$ is a regular graph (the degrees of all vertices are the same).

For real Schl\"afli sixes $\L$ it is easy to differ chiral 4-subconfigurations from achiral ones using the corresponding 6-point configurations $\P\subset\Bbb P^2$ on Figure \ref{adjgraph6}.
The two transversals for a 4-subconfiguration $\L\sm\{L_i,L_j\}$ are the lines $L_i'$ and $L_j'$
of the complementary Schl\"afli six $\L'$ that are represented in $\Bbb P^2$ by two conics
passing through $\P\sm\{p_i\}$ and $\P\sm\{p_j\}$. Their mutual position determines chirality of
 $\L\sm\{L_i,L_j\}$ as it is shown of Figure \ref{chirality-via-conics}.
\begin{figure}[h]
\vskip-1mm \centering
    \subfigure[(a) Chiral subconfiguration]{\scalebox{0.4}{\includegraphics{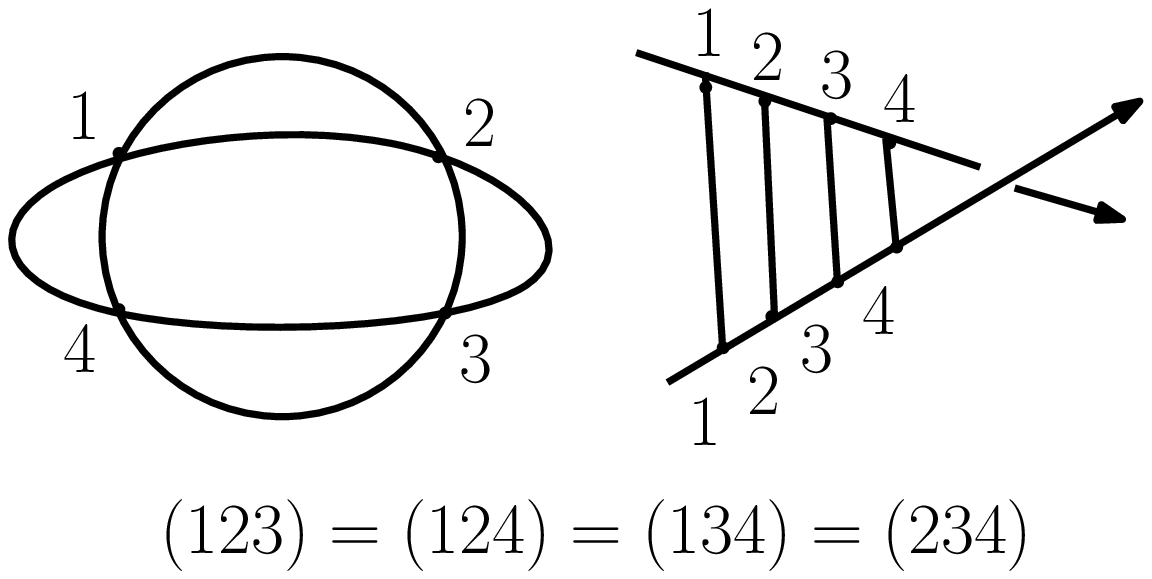}}}\hspace{2.2cm}
    \subfigure[(b) Achiral subconfiguration]{\scalebox{0.4}{\includegraphics{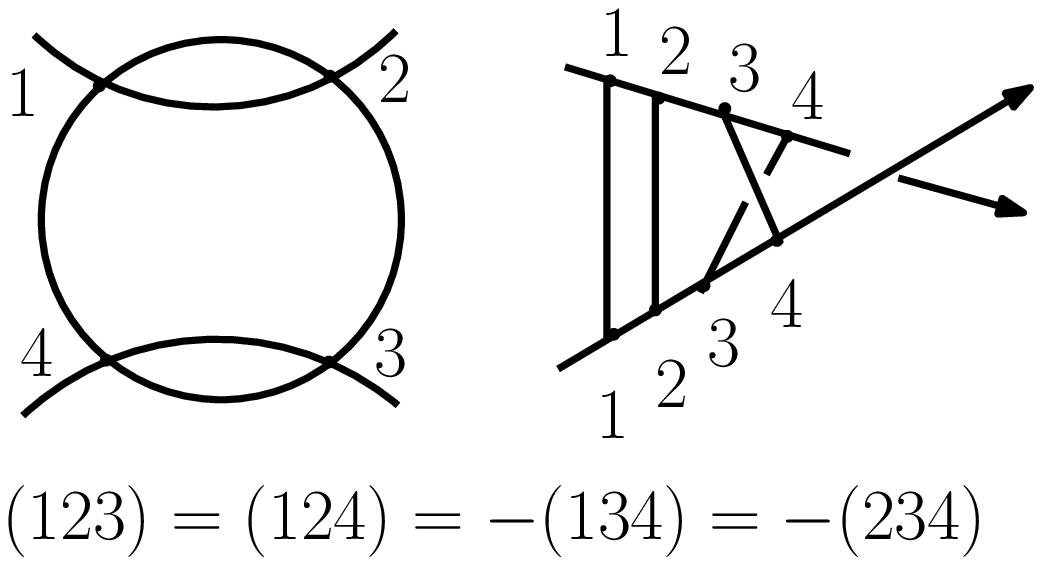}}}
\caption{The chirality of quadratuples of lines.}
 \label{chirality-via-conics}
\vskip-2mm\end{figure}
This allows easily to construct the chirality graph $\G_\L$ for $\L$ (see Figure~\ref{chiralgraph6})
and to justify Theorem  \ref{homogeinety-theorem}.
Namely, graph $\G_\L$ is spanned by the inseparability graph shown on Figure \ref{adjgraph6}.

\subsection{Cubic surfaces of type $F_2$}
One may consider configurations of skew real lines on real non-singular cubic surfaces $X$ of types other than $F_1$.
In the case of type $F_2$, among the 15 real lines on $X$ one can find maximum 5 disjoint ones.
By blowing down them in $X$ we obtain 5 generic points on an ellipsoid (such that no 4 points are coplanar).
As it was observed by Segre \cite{Se}, such 5-point configurations form one real deformation class.
Thus, configurations of 5 real skew lines on $X$ of type $F_2$
belong to the same coarse deformation class in the Viro list (see Subsection \ref{Viro-Masurovski}).
Arguments similar to the ones in Lemma \ref{reducibility} imply that this class is $\la J(12345)\ra$.

For cubics of other types our question is trivial: if $X$ has types
$F_3$, $F_4$ and $F_5$ then the maximum number of disjoint real lines on $X$ is respectively $3$, $1$ and $1$.

\subsection{The Viro 2-cocycle}
For any set 4 skew lines, $L_i,L_j,L_k, L_m$, we have
$$ \lk(L_i,L_j,L_k)\lk(L_i,L_j,L_m)\lk(L_i,L_k,L_m)\lk(L_j,L_k,L_m)=1
$$
(see \cite{DV}). This is just a cocycle relation for $\lk(L_i,L_j,L_k)$ that is viewed as a $\Z/2$-valued simplicial 2-cochain
of the simplex $\D_\L$ with the vertex set $\L=\{L_1,\dots,L_n\}$. Here $\L$ is any configuration of skew lines in $\Rp3$,
and the coefficients group $\Z/2$ appears in multiplicative form, as group $\{1,-1\}$.

We may consider also a pair of complementary to each other $\Z/2$-valued 2-chains. One is the sum of 2-simplices $[L_iL_jL_k]$
such that $\lk(L_i,L_j,L_k)=1$,
and the other is the sum of such 2-simplices with $\lk(L_i,L_j,L_k)=-1$.
In general they need not to be cycles,
but, clearly, for $n=6$, if one of them is a cycle, then the other one is a cycle too.
Moreover, it turns out that for $n=6$ these 2-chains
{\it are 2-cycles if and only if $\L$ is homogeneous}, and in particular, it is so in the case of Schl\"afli sixes.

\subsection{Icosahedral pair of 2-cycles}
In the case of icosahedral Schl\"afli six $\L=\{L_1,\dots,L_6\}$, the union of the triangles of each of the above
2-cycles is a subset of $\D_\L$ homeomorphic to $\Rp2$ and triangulated like the quotient of an icosahedron by the antipodal map.
If lines $L_i$ are numerated like the points $p_i$ on Figure \ref{ACC6}, then
one of these 2-cycles includes triangles  $[L_iL_jL_k]$
with the following triples $(ijk):$
$$ (123), (234), (345), (451), (512), (136), (356), (526), (246), (416),
$$
and the other 2-cycle includes the remaining 10 triangles with the triples $(ijk):$
$$(124), (235), (341), (452), (513), (126), (236), (346), (456),(516).
$$


\end{document}